\numberwithin{equation}{section}
\newtheorem*{Thm*}{Theorem}
\newtheorem{Cor}[equation]{Corollary}
\newtheorem{Lem}[equation]{Lemma}
\newtheorem{Prop}[equation]{Proposition}
\newtheorem{Thm}[equation]{Theorem}
\theoremstyle{remark}
\newtheorem{Def}[equation]{Definition}
\newtheorem{Hyp}[equation]{Hypotheses}
\newtheorem{Rem}[equation]{Remark}
\newtheorem{Que}[equation]{Question}
\newcommand{\nc}{\newcommand}
\nc{\dmo}{\DeclareMathOperator}
\dmo{\coker}{coker}
\dmo{\colim}{colim}
\dmo{\cone}{cone}
\dmo{\End}{End}
\dmo{\Hom}{Hom}
\dmo{\Id}{Id}
\dmo{\Ind}{Ind}
\dmo{\Ker}{Ker}
\dmo{\opname}{op}
\dmo{\supp}{supp}
\dmo{\Spc}{Spc}
\dmo{\Spec}{Spec}
\nc{\bbN}{\mathbb{N}}
\nc{\bbQ}{\mathbb{Q}}
\nc{\bbR}{\mathbb{R}}
\nc{\bbZ}{\mathbb{Z}}
\nc{\cat}[1]{\mathscr{#1}}
\nc{\eg}{{\sl e.g.}}
\nc{\Endcat}[1]{\End_{\cat #1}}
\nc{\Homcat}[1]{\Hom_{\cat #1}}
\nc{\ie}{{\sl i.e.}\ }
\nc{\inv}{^{-1}}
\nc{\isoto}{\overset{\sim}{\,\to\,}}
\nc{\isotoo}{\overset{\sim}{\,\too\,}}
\nc{\op}{^{\opname}}
\nc{\oto}[1]{\overset{#1}\to}
\nc{\otoo}[1]{\overset{#1}{\,\too\,}}
\nc{\Paul}[1]{{\color{Violet}#1}}
\nc{\qquadtext}[1]{\qquad\textrm{#1}\qquad}
\nc{\quadtext}[1]{\quad\textrm{#1}\quad}
\nc{\SET}[2]{\big\{\,#1\,\big|\,#2\,\big\}}
\nc{\too}{\mathop{\longrightarrow}\limits}
\nc{\unit}{\mathbb{1}}
\dmo{\DM}{DM}
\dmo{\im}{im}
\dmo{\SH}{SH}
\nc{\ababs}{{\sl ab absurdo}}
\nc{\adhpt}[1]{\overline{\{#1\}}}
\nc{\adj}{\dashv}
\nc{\bbA}{\mathbb{A}}
\nc{\DMc}{\DM_{\text{gm}}}
\nc{\eps}{\epsilon}
\nc{\ideal}[1]{\langle #1\rangle}
\nc{\oursetminus}{\!\smallsetminus\!}
\nc{\potimes}[1]{^{\otimes #1}}
\nc{\sbull}{{\scriptscriptstyle\bullet}}
\nc{\SpcK}{\Spc(\cat K)}
\nc{\SpcL}{\Spc(\cat L)}
\nc{\then}{\Rightarrow}
\nc{\tnil}{$\otimes$-nilpotent}
\nc{\tnilce}{$\otimes$-nilpotence}
\nc{\cM}{\cat{M}}
\nc{\cP}{\cat{P}}
\nc{\cQ}{\cat{Q}}
\begin{document}


\title[On surjectivity on tensor-triangular spectra]{On the surjectivity of the map of spectra\\associated to a tensor-triangulated functor}
\author{Paul Balmer}
\date{2018 March 22}

\address{Paul Balmer, Mathematics Department, UCLA, Los Angeles, CA 90095-1555, USA}
\email{balmer@math.ucla.edu}
\urladdr{http://www.math.ucla.edu/$\sim$balmer}

\begin{abstract}
We prove a few results about the map $\Spc(F)$ induced on tensor-triangular spectra by a tensor-triangulated functor~$F$. First, $F$ is conservative if and only if $\Spc(F)$ is surjective on closed points. Second, if $F$ detects tensor-nilpotence of morphisms then $\Spc(F)$ is surjective on the whole spectrum. In fact, surjectivity of~$\Spc(F)$ is equivalent to $F$ detecting the nilpotence of some class of morphisms, namely those morphisms which are nilpotent on their cone.
\end{abstract}

\subjclass[2010]{18E30; 14F42, 19K35, 55U35}
\keywords{Tensor-triangular, spectra, nilpotence, conservative, surjectivity}

\thanks{Research supported by NSF grant~DMS-1600032.}

\maketitle


\section{Introduction}


\begin{Hyp}
\label{hyp}%
Throughout the paper, $F\colon\cat{K}\to \cat{L}$ is a tensor-triangulated functor between essentially small tensor-triangulated categories~$\cat{K}$ and~$\cat{L}$. Assume that $\cat{K}$ is \emph{rigid}, \ie every object has a dual (Remark~\ref{rem:rigid}).
\end{Hyp}

Consider the induced map on spectra
\[
\varphi=\Spc(F)\colon \SpcL\to \SpcK
\]
in the sense of tensor-triangular geometry~\cite{Balmer05a,BalmerICM,Stevenson16pp}. Our first result is a characterization of conservativity of~$F$.

\begin{Thm}
\label{thm:surj-closed}%
Under Hypotheses~\ref{hyp}, the following properties are equivalent:
\begin{enumerate}[\rm(a)]
\item
The functor $F\colon\cat{K}\to \cat{L}$ is conservative, \ie it detects isomorphisms.
\smallbreak
\item
The induced map~$\varphi\colon\SpcL\to \SpcK$ is surjective on closed points, \ie for every closed point $\cP$ in~$\SpcK$, there exists $\cQ$ in~$\SpcL$ such that $\varphi(\cQ)=\cP$.
\end{enumerate}
\end{Thm}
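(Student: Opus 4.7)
The plan is to use two standard facts: (i) a triangulated functor is conservative if and only if it detects zero objects, \ie $F(a)=0$ forces $a=0$; and (ii) the closed points of $\SpcK$ coincide with the minimal primes of $\cat{K}$, since the topology gives $\overline{\{\cP\}}=\SET{\cQ}{\cQ\subseteq\cP}$, and a Zorn argument (using that the intersection of a descending chain of primes is again prime) shows every prime of $\cat{K}$ contains at least one minimal prime.

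For the implication (b)$\then$(a), I would take $f\colon a\to b$ with $F(f)$ an isomorphism and set $c=\cone(f)$, so that $F(c)=0$ and it suffices to prove $c=0$. Since $\cat{K}$ is rigid, $c$ vanishes if and only if $c$ belongs to every prime of $\cat{K}$. If some prime $\cP_0$ omitted $c$, then by fact (ii) one could pick a minimal prime $\cP\subseteq\cP_0$ (still omitting $c$), and hypothesis (b) would supply $\cQ\in\SpcL$ with $F\inv(\cQ)=\cP$; then $c\notin\cP$ would translate into $F(c)\notin\cQ$, contradicting $F(c)=0\in\cQ$.

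The converse (a)$\then$(b) is the substantive half. Fix a closed (equivalently, minimal) prime $\cP$ of $\SpcK$ and apply Balmer's prime-ideal construction to the family $S:=\SET{F(a)}{a\in\cat{K}\oursetminus\cP}\subseteq\cat{L}$. Primality of $\cP$ makes $\cat{K}\oursetminus\cP$ closed under $\otimes$, hence the same is true of $S$; and conservativity forces $0\notin S$, since $F(a)=0$ with $a\notin\cP$ would give $a=0\in\cP$. Zorn's lemma then supplies a thick $\otimes$-ideal $\cQ$ of $\cat{L}$ that is maximal among those disjoint from $S$, and the usual argument (if $a\otimes b\in\cQ$ with $a,b\notin\cQ$, pick $s_i\in S$ in $\cQ\vee\ideal{a}$ and $\cQ\vee\ideal{b}$ respectively and observe $s_1\otimes s_2\in\cQ\cap S$) shows such a $\cQ$ is prime. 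By construction $F\inv(\cQ)$ is a prime of $\cat{K}$ contained in $\cP$, so minimality of $\cP$ upgrades this to the desired equality $F\inv(\cQ)=\cP$.

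The main obstacle is the construction of $\cQ$ in (a)$\then$(b); once the correct multiplicative-set setup is in place, the rest is formal. It is worth emphasizing that minimality of $\cP$ is used essentially to pass from $F\inv(\cQ)\subseteq\cP$ to $F\inv(\cQ)=\cP$, which explains why this argument is confined to closed points and motivates the finer surjectivity results appearing later in the paper.
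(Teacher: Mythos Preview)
Your proof is correct and follows essentially the same approach as the paper's. Both directions match: for (a)$\Rightarrow$(b) the paper also uses the $\otimes$-multiplicative set $F(\cat{K}\smallsetminus\cP)$ and the existence result you spell out via Zorn (the paper cites it as \cite[Lemma~2.2]{Balmer05a}), then invokes minimality of~$\cP$; for (b)$\Rightarrow$(a) the paper argues via $\supp(c)\neq\varnothing$ and picks a closed point there, which is exactly your ``some $\cP_0$ omits $c$, pass to a minimal $\cP\subseteq\cP_0$'' reformulated in support language.
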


We can remove the assumption that $\cat{K}$ is rigid, at the cost of replacing~(a) by:
\begin{enumerate}[\rm(a')]
\item
\label{it:a'}%
$F$ detects \tnilce\ of objects, \ie $F(x)=0 \then x\potimes{n}=0$ for some $n\ge 1$.
\end{enumerate}

\medbreak

Our main results are dedicated to surjectivity of~$\varphi$ on the whole of~$\SpcK$.

\begin{Thm}
\label{thm:surj-nil}%
Under Hypotheses~\ref{hyp}, suppose that the functor $F\colon\cat{K}\to \cat{L}$ detects \tnilce\ of morphisms, \ie every $f\colon x\to y$ in~$\cat{K}$ such that $F(f)=0$ satisfies $f\potimes{n}=0$ for some~$n\ge1$. Then the induced map~$\varphi\colon\SpcL\to \SpcK$ is surjective.
\end{Thm}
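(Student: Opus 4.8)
The plan is to fix a prime $\cP \in \SpcK$ and produce some $\cQ \in \SpcL$ with $\varphi(\cQ) = \cP$, i.e.\ with $F\inv(\cQ) = \cP$. The natural strategy is to show that $\cP$ lies in the image by a support-theoretic argument: recall that $\varphi(\SpcL)$ is the closure of a certain subset, or rather, that $\cP \notin \overline{\varphi(\SpcL)}$ would translate into the vanishing of $F$ on a specific object; more precisely, the standard fact is that $\varphi\inv(\adhpt{\cP}) = \supp(z)$ is impossible to use directly, so instead I would work with the complementary open. The key translation is: $\cP \in \im(\varphi)$ if and only if the Verdier localization/quotient functor $F_\cP\colon \cat{K}/\cP \to \cat{L}/(\text{something})$ is nonzero, i.e.\ the category $\cat{L}$ does not become trivial after the relevant base change. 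So the first step is to reduce, by passing to the Verdier quotients $\cat{K}\to\cat{K}/\cP$ and an appropriately chosen quotient of $\cat{L}$, to the case where $\cP = 0$ is the zero ideal in $\cat{K}$ (using that $\cat{K}/\cP$ is still tensor-triangulated with $0$ a prime, and that the hypothesis of detecting $\otimes$-nilpotence of morphisms is inherited by the induced functor — this inheritance is a point to check carefully).

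Once $\cP = (0)$, I must show $\SpcL \neq \varnothing$, equivalently $\cat{L} \neq 0$, equivalently $\unit_{\cat L} \neq 0$. Suppose for contradiction that $\cat{L} = 0$, so $F(x) = 0$ for every $x$; in particular $F(\Id_x) = 0$, i.e.\ $F(\unit) = 0$ hence $F$ kills the identity morphism $\Id_{\unit}\colon \unit\to\unit$. By the hypothesis that $F$ detects $\otimes$-nilpotence of morphisms, $(\Id_\unit)\potimes{n} = \Id_{\unit\potimes{n}} = \Id_{\unit} = 0$ for some $n \ge 1$, which forces $\unit_{\cat K} = 0$, hence $\cat{K} = 0$ and $\SpcK = \varnothing$ — contradicting that $\cP \in \SpcK$. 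This handles the existence of a prime lying over $\cP$ at the level of the quotient; the last step is to lift this back, checking that a prime $\cQ'$ in the quotient of $\cat{L}$ corresponds, under the various localization functors, to a genuine prime $\cQ \in \SpcL$ with $\varphi(\cQ) = \cP$.

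The main obstacle I anticipate is not the contradiction argument — which is short, as above — but the bookkeeping in the first step: one must identify the correct quotient category of $\cat{L}$ to pair with $\cat{K}/\cP$, verify that $F$ descends to a tensor-triangulated functor between these quotients, verify that this descended functor still \emph{detects $\otimes$-nilpotence of morphisms} (this uses that morphisms in a Verdier quotient are represented by fractions, and that $\otimes$-nilpotence of a fraction can be traced back to $\otimes$-nilpotence of a related morphism in $\cat{K}$ — possibly after tensoring with a suitable object), and then translate a prime upstairs into a prime downstairs so that $\varphi(\cQ) = \cP$ on the nose. An alternative that avoids some of this, and which I would try first, is to argue directly: given $\cP$, consider the thick $\otimes$-ideal $\cat{J} = F\inv(0)$-type construction, or better, use the characterization of $\im(\varphi)$ via supports — namely that $\cP \in \im\varphi$ iff for every $x$ with $\cP \notin \supp(x)$... — no; the cleanest direct route is: $\cP \notin \im(\varphi)$ would mean the localization $\cat{L}_\cP := \cat{L}/\ideal{F(\cat{K}\oursetminus\cP)}$ vanishes... and then run the $\Id_\unit$ argument there. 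Whichever packaging is chosen, the arithmetic heart is the one-line nilpotence argument on $\Id_\unit$; everything else is making the localizations talk to each other correctly, and ensuring the nilpotence-detection hypothesis survives localization, which is where I expect to spend most of the effort.
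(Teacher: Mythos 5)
Your one-line argument is correct but proves something weaker than the theorem. Under the hypothesis, $F(x)=0$ gives $F(\Id_x)=0$, so $\Id_x\potimes{n}=0$ for some $n$, so $x\potimes{n}=0$, so $x=0$ by rigidity (Remark~\ref{rem:rigid}); that is, $F$ is conservative. But by Theorem~\ref{thm:surj-closed}, conservativity is equivalent to surjectivity of $\varphi$ onto \emph{closed points} only, which is strictly weaker than the stated conclusion. All the content of Theorem~\ref{thm:surj-nil} therefore lies in the reduction you postpone, and your sketch does not supply it. In fact the localization you write, $\cat{L}/\ideal{F(\cat K\oursetminus\cP)}$, is always zero, since $\unit=F(\unit)\in F(\cat K\oursetminus\cP)$. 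The correct target is $\cat L/\ideal{F(\cP)}$, and then one needs the reduced functor $\bar F\colon\cat K/\cP\to\cat L/\ideal{F(\cP)}$ to be \emph{conservative}, not merely the codomain to be nonzero: a nonzero category has some prime, but nothing forces that prime to pull back to~$(0)$. Unwinding definitions, conservativity of~$\bar F$ is exactly the statement $F(\cat K\oursetminus\cP)\cap\ideal{F(\cP)}=\varnothing$, which is precisely what the existence lemma~\eqref{it:A} requires to produce a $\cQ$ with $F\inv(\cQ)=\cP$. So the ``reduction step'' is the whole theorem, and you have given no argument for it.

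What is missing is a mechanism to make a hypothesis about \emph{morphisms} say something about the tt-\emph{ideal} $\ideal{F(\cP)}$. The paper does this via the fiber $\xi_c\colon w_c\to\unit$ of the coevaluation $\eta_c\colon\unit\to c^\vee\otimes c$: Corollary~\ref{cor:key} (resting on Proposition~\ref{prop:spectra^3}) shows $\ideal{F(c)}=\SET{y\in\cat L}{F(\xi_c)\potimes{n}\otimes y=0\text{ for some }n\ge 1}$. Granting also $\ideal{F(\cP)}=\bigcup_{c\in\cP}\ideal{F(c)}$ (part of the proof of Lemma~\ref{lem:key}), one argues: if $s\notin\cP$ had $F(s)\in\ideal{F(\cP)}$, then $F(\xi_c\potimes{n}\otimes s)=0$ for some $c\in\cP$, $n\ge 1$; by hypothesis $\xi_c\potimes{n}\otimes s$ is \tnil, so $\xi_c$ is \tnil\ on $s\potimes{m}$, so $s\potimes{m}\in\ideal{c}\subseteq\cP$ and hence $s\in\cP$, a contradiction. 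The paper packages this as Lemma~\ref{lem:key} directly, without passing to Verdier quotients, which also avoids the calculus of fractions you flag as a worry. Your plan is not wrong in outline, but the inheritance of the nilpotence-detection hypothesis across the quotient --- the step you flag as ``bookkeeping'' --- is in fact where the theorem lives, and carrying it out seems to require Corollary~\ref{cor:key} in any case.
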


This result is clearly a corollary of (b)$\then$(a) in the following more technical result:
\begin{Thm}
\label{thm:main}%
Under Hypotheses~\ref{hyp}, the following properties are equivalent:
\begin{enumerate}[\rm(a)]
\item
The morphism $\varphi\colon\SpcL\to \SpcK$ is surjective.
\smallbreak
\item
\label{it:nil-cone}%
The functor $F\colon \cat{K}\to \cat{L}$ detects \tnilce\ of morphisms which are already \tnil\ on their cone, \ie every $f\colon x\to y$ in~$\cat{K}$ such that $F(f)=0$ and such that $f\potimes{m}\otimes\cone(f)=0$ for some $m\ge 1$ satisfies $f\potimes{n}=0$ for some~$n\ge1$.
\end{enumerate}
\end{Thm}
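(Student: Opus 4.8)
The plan is to understand surjectivity of $\varphi=\Spc(F)$ through its complement. Recall that the image of $\varphi$ is computed by the kernel of $F$: for a thick $\otimes$-ideal, $\Spc(F)\inv$ of a Thomason-closed set $\supp(z)$ is $\supp(F(z))$, and more relevantly, a prime $\cP\in\SpcK$ lies in the closure of the image of $\varphi$ (equivalently, $\cP$ is not in the complement of $\overline{\im\varphi}$) precisely when $F(\cat K/\cP)\neq 0$, i.e.\ when $F$ does not annihilate the whole "residue category" at $\cP$. The subtlety is that $\im\varphi$ need not be closed, so I would instead work directly with the characterization: $\varphi$ is surjective if and only if for every $\cP\in\SpcK$ there is a prime of $\cat L$ mapping to it, which by the universal property of localization reduces to asking whether a certain localization of $\cat L$ is nonzero. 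Concretely, for a fixed $\cP$, form the Verdier quotient and consider the induced functor; $\cP\in\im\varphi$ iff the image of $\cat L$ under the composite is a nonzero tensor-triangulated category, which by Balmer's classification happens iff that category has a prime.

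For the implication (b)$\Rightarrow$(a), suppose $\cP\in\SpcK$ is \emph{not} in $\im\varphi$. The idea is to extract from this failure a morphism $f$ witnessing the failure of detection in (b). Non-surjectivity at $\cP$ should produce an object $z\in\cat K$ with $z\notin\cP$ (so $z\potimes{n}\neq 0$ in the relevant localized sense for all $n$, in particular $z$ is not $\otimes$-nilpotent away from $\cP$) but with $F(z)$ "small" — more precisely $F(z)$ lying in the thick $\otimes$-ideal generated by things $F$ can already see to be nilpotent. The cleanest route: I expect one can choose the object so that $F(z)=0$ outright, by playing off a point of $\SpcL$ against $\cP$; then take $f$ to be a suitable morphism built from $z$, e.g.\ related to the canonical map into (a cone of) $z$, arranged so that $\cone(f)$ is $\otimes$-nilpotent-after-$z$-tensoring while $f$ itself is not $\otimes$-nilpotent. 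Then $F(f)=0$ but $f\potimes{n}\neq 0$, contradicting (b). The point of the peculiar hypothesis "$f\potimes{m}\otimes\cone(f)=0$" is exactly that the $f$ produced this way has a cone controlled by $z$, hence killed after tensoring with a power of $f$ (which "contains" a copy of $z$).

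For (a)$\Rightarrow$(b), suppose $\varphi$ is surjective and let $f\colon x\to y$ satisfy $F(f)=0$ and $f\potimes{m}\otimes\cone(f)=0$. I want $f\potimes{n}=0$, and the strategy is the usual tensor-triangular nilpotence argument: $f\potimes{n}=0$ for some $n$ iff $f$ maps to $0$ in $\cat K/\cP$ for every prime $\cP$ — this is the local-to-global principle for nilpotence of morphisms (each $\cat K/\cP$ has the property that a morphism is $\otimes$-nilpotent iff it vanishes there, by rigidity one reduces to detecting nilpotence of the image object, and the intersection of all primes being $0$ gives a single $n$ by a compactness/Noetherianity-free argument using that $\cat K$ is essentially small). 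So fix $\cP$; by surjectivity pick $\cQ\in\SpcL$ with $\varphi(\cQ)=\cP$. Now $F$ induces $\bar F\colon \cat K/\cP\to\cat L/\cQ$, and $\bar F(\bar f)=0$ since $F(f)=0$. The hypothesis $f\potimes{m}\otimes\cone(f)=0$ says that in $\cat K/\cP$ either $\bar f$ is already an isomorphism (if $\cone(f)\in\cP$) — but an isomorphism with $\bar F(\bar f)=0$ in the nonzero category $\cat K/\cP$ forces... here I must be careful: I would argue that $\bar f\otimes\bar f$ being an isomorphism that becomes $0$ under the \emph{conservative-on-that-fiber} behaviour is impossible unless $\bar f=0$; alternatively $\cone(f)\notin\cP$ means $\bar f\potimes{m}=0$ in $\cat K/\cP$ directly, so $\bar f$ is already nilpotent there and we are done for that $\cP$. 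Ranging over all $\cP$ and assembling gives $f\potimes{n}=0$.

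The main obstacle I anticipate is the construction in (b)$\Rightarrow$(a): producing, from a single missing point $\cP\notin\im\varphi$, an \emph{actual morphism} $f$ (not just an object) with $F(f)=0$, with $f$ not $\otimes$-nilpotent, and with the cone condition $f\potimes{m}\otimes\cone(f)=0$. Getting an object $z$ with $F(z)=0$ yet $z$ "nonzero modulo $\cP$" is comparatively standard; turning that into a morphism with the prescribed cone is the delicate engineering step, and I expect it requires a clever choice — perhaps taking $f$ to be (a shift of) the second map in a triangle $z\to \unit \to \cone$, or more likely a map of the form $\unit\to z[1]$ or $z\to 0$ reinterpreted, so that $\cone(f)$ is (up to shift) $z$ itself or $\unit\oplus z[1]$, making $f\potimes{m}\otimes\cone(f)$ a multiple of a power of $z$ times things, hence forced to $0$. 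Pinning down that $f$ so that simultaneously $f\potimes{n}\neq0$ is the crux.
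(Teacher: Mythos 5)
There are genuine gaps in both directions, and the key constructions of the paper are missing.

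For (b)$\Rightarrow$(a), your plan is to take $\cP\notin\im\varphi$ and extract an object $z\notin\cP$ with $F(z)=0$, then build $f$ from $z$. But no such object $z$ need exist: $F$ can be faithful (hence $F(z)=0\Rightarrow z=0$) while $\varphi$ fails to be surjective \emph{a priori} — indeed Corollary~\ref{cor:surj-ff} is precisely the nontrivial statement that faithfulness forces surjectivity. So the intermediate object you want cannot be the source of the contradiction. The paper instead works with a \emph{morphism} from the start: for $x\in\cP$ one takes $\xi_x\colon w_x\to\unit$, a homotopy fiber of the coevaluation $\eta_x\colon\unit\to x^\vee\otimes x$, and shows (Corollary~\ref{cor:key}) that $\ideal{x}=\{z:\xi_x\text{ is \tnil\ on }z\}$ and that $\xi_x^{\otimes n}$ is automatically \tnil\ on its cone. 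Combined with the general surjectivity criterion of Lemma~\ref{lem:key} (condition~\eqref{eq:N}: $F(\xi_x^{\otimes n}\otimes s)\neq 0$ for $x\in\cP$, $s\notin\cP$), one gets a clean contradiction from $F(g)=0$ for $g=\xi_x^{\otimes n}\otimes s$, because $g$ is then a morphism to which hypothesis~(b) applies, and its \tnilce\ would put $s^{\otimes m}$ into $\ideal{x}\subseteq\cP$. Your proposal never identifies this construction, and you concede the construction of $f$ is "the crux."

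For (a)$\Rightarrow$(b), your local-to-global plan fails in the case $\cone(f)\notin\cP$. You assert that $f^{\otimes m}\otimes\cone(f)=0$ "directly" gives $\bar f^{\otimes m}=0$ in $\cat K/\cP$ because $\overline{\cone(f)}\neq 0$. This inference is false in a general local tt-category: a morphism can be \tnil\ on a nonzero object without being \tnil\ (e.g.\ multiplication by $\epsilon$ on the unit in $D^{\mathrm{perf}}(k[\epsilon]/\epsilon^2)$ is \tnil\ on the residue field $k$ but not \tnil). The paper's argument does not localize at each prime: it uses $F(f)=0$ to get $F(\cone(f))\cong F(y)\oplus\Sigma F(x)$, then takes supports, applies fact~\eqref{it:D} and surjectivity of $\varphi$ to conclude $\supp(\cone(f))\supseteq\supp(x)\cup\supp(y)$, hence $x,y\in\ideal{\cone(f)}$; finally Proposition~\ref{prop:nil-tt-ideal} promotes "\tnil\ on $\cone(f)$" to "\tnil\ on $x$," and a one-line factoring gives $f^{\otimes(n+1)}=0$. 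That step — deducing a global containment of tt-ideals from surjectivity and supports — is exactly what your prime-by-prime argument is missing. You also do not substantiate the compactness step that would produce a uniform $n$ from per-prime nilpotence.

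In short: the proposal correctly identifies the shape of the problem but lacks the two central devices of the paper (the fiber $\xi_x$ of the coevaluation, and the support-theoretic consequence of surjectivity), and the one concrete inference you make in the (a)$\Rightarrow$(b) direction is incorrect as stated.
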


At this point, the Devinatz-Hopkins-Smith~\cite{DevinatzHopkinsSmith88} Nilpotence Theorem might come to some readers' mind. This celebrated result asserts that a morphism between finite objects in the topological stable homotopy category~$\SH$ must be $\otimes$-nilpotent if it vanishes on complex cobordism. Hopkins and Smith used the Nilpotence Theorem in the subsequent work~\cite{HopkinsSmith98} to prove the Chromatic Tower Theorem. A reformulation of the latter, in terms of~$\Spc(\SH^c)$, can be found in~\cite[\S\,9]{Balmer10b}. From the Nilpotence Theorem it follows that every prime of~$\SH^c$ is the kernel of some Morava $K$-theory. This implication is analogous to the surjectivity of Theorem~\ref{thm:surj-nil} in the special case of~$\SH$.

Let us stress however that the scope of Theorems~\ref{thm:surj-closed} and~\ref{thm:surj-nil} is broader than the topological example. In fact, $\SH$ plays among general tensor-triangulated categories the same role that $\bbZ$ plays among general commutative rings. Commutative algebra is not only the study of $\bbZ$, and tt-geometry is not only the study of~$\SH$. For the reader who never heard of tensor-triangulated categories and yet had the fortitude to read thus far, let us recall that tt-categories also appear in algebraic geometry (\eg\ derived categories of schemes), in representation theory (\eg\ derived and stable categories of finite groups), in noncommutative topology (\eg\ $KK$-categories of $C^*$-algebras), in motivic theory (\eg\ stable $\bbA^1$-homotopy and derived categories of motives), and in equivariant analogues (\eg\ equivariant stable homotopy theory). A good introduction can be found in~\cite[\S\,1.2]{HoveyPalmieriStrickland97}. Tensor-triangular geometry is an umbrella theory for all those examples. In particular, computing $\SpcK$ is \emph{the} fundamental problem for every tt-category~$\cat{K}$ out there; see~\cite[Thm.\,4.10]{Balmer05a}.

\medbreak

After this motivational digression, let us return to the development of our results. It is interesting to know whether the converse of Theorem~\ref{thm:surj-nil} holds true in glorious generality: Does surjectivity of~$\Spc(F)$ alone guarantee that $F$ detects \tnilce\ of morphisms? By Theorem~\ref{thm:main}, this problem can be reduced as follows.
\begin{Que}
\label{que}%
Under Hypotheses~\ref{hyp}, if $\varphi\colon\SpcL\to \SpcK$ is surjective and if $f\colon x\to y$ satisfies $F(f)=0$, is $f$ necessarily \tnil\ on its cone?
\end{Que}

We do not know any counter-example. In fact, we can give a positive answer under the assumption that $F\colon \cat{K}\to \cat{L}$ admits a right adjoint. Since $\cat{K}$ and $\cat{L}$ are essentially small (typically the `compact' objects of some big ambient category), existence of such a right adjoint is rather restrictive. In the context  of~\cite{BalmerDellAmbrogioSanders16}, it would be equivalent to having `Grothendieck-Neeman' duality. To give an example, this right adjoint exists in the case of a finite separable extension, see~\cite{Balmer16b}. The following are generalizations of some of the results in~\cite{Balmer16a}.

\begin{Thm}
\label{thm:FU}%
Under Hypotheses~\ref{hyp}, suppose that $F\colon \cat{K}\to \cat{L}$ admits a right adjoint~$U\colon\cat{L}\to \cat{K}$. Then the map $\varphi\colon\SpcL\to \SpcK$ is surjective if and only if the functor $F\colon\cat{K}\to \cat{L}$ detects \tnilce\ of morphisms.
\end{Thm}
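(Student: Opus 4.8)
The plan is to use Theorem~\ref{thm:main} to reduce everything to Question~\ref{que}, and then exploit the right adjoint~$U$ to answer that question affirmatively. Since the implication ``$F$ detects \tnilce\ of morphisms $\then$ $\varphi$ surjective'' is exactly Theorem~\ref{thm:surj-nil}, only the converse requires proof. So assume $\varphi$ is surjective and let $f\colon x\to y$ in~$\cat K$ satisfy $F(f)=0$; we must show $f\potimes{n}=0$ for some~$n\ge1$. By Theorem~\ref{thm:main}, surjectivity of~$\varphi$ tells us that $F$ detects \tnilce\ of morphisms that are already \tnil\ on their cone, so it suffices to prove that our~$f$ \emph{is} \tnil\ on its cone, \ie $f\potimes{m}\otimes\cone(f)=0$ for some~$m\ge1$. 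This is precisely Question~\ref{que}, and the remaining work is to answer it using~$U$.

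The key step is the following. Because $F(f)=0$ and $F$ is exact, the triangle $x\oto{f}y\to\cone(f)\to\Sigma x$ is sent by~$F$ to a split triangle, so $F(\cone(f))\cong F(y)\oplus\Sigma F(x)$ and, more importantly, the morphism $F(y)\to F(\cone(f))$ is a split monomorphism in~$\cat L$. Applying the right adjoint and using the unit $\eta\colon\Id_{\cat K}\to UF$, one gets that $y\to\cone(f)\to UF(\cone(f))$ composed appropriately produces a section up to the unit; concretely, the composite $y\to\cone(f)\oto{\eta}UF(\cone(f))$ admits a retraction after we know $F(f)=0$. Chasing the resulting diagram, the morphism $\eta_y\colon y\to UF(y)$ factors through $y\to\cone(f)$, which forces $\eta_y\circ f = 0$ composed with... — more carefully, one shows that $f$ becomes \tnil\ after tensoring with the cone of the unit~$\eta_{\unit}\colon\unit\to UF(\unit)$, since $F(f)=0$ means $f$ dies after applying~$UF$, hence $f\otimes UF(\unit)$ has a description making $f\otimes\cone(\eta_\unit)$ nilpotent; simultaneously, the projection formula and the fact that $F(f)=0$ give that $f$ is \tnil\ on $UF(\unit)$ itself. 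Combining these two, $f$ is \tnil\ on $\unit$ up to the two-step filtration $\cone(\eta_\unit)\to\unit\to UF(\unit)$, which (after raising to a suitable power, using that the ``tensor-ideal generated'' estimates multiply) yields $f\potimes{n}=0$.

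Let me restructure the decisive point, which is really about cones rather than the unit. Since $F(f)=0$, applying~$U$ and the unit shows that $f$ factors as $x\oto{\eta_x} UF(x)\to y$? No: rather, $\eta_y\circ f = UF(f)\circ\eta_x = 0$. Thus $f$ is killed by~$\eta_y$, so $f$ factors through the fiber of~$\eta_y$, \ie through $\Sigma\inv\cone(\eta_y)\to y$. Now $\cone(\eta_y) \cong \cone(\eta_\unit)\otimes y$ by the projection formula, so $f$ factors through $(\Sigma\inv\cone(\eta_\unit))\otimes y \to y$, the map being $(\Sigma\inv\text{(unit cone map)})\otimes y$. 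Iterating: $f\potimes{2}$ factors through $(\Sigma\inv\cone(\eta_\unit))\potimes{2}\otimes(x\otimes y)\to x\otimes y$, and in general $f\potimes{k}$ factors through a morphism whose source involves $(\Sigma\inv\cone(\eta_\unit))\potimes{k}$. The object $\cone(\eta_\unit)$ lives in the thick tensor-ideal generated by~$\cone(\eta_\unit)$, and one checks that $\cone(f)$ — hmm, this is not literally what Question~\ref{que} asks. The honest route: show $f\potimes{m}\otimes\cone(f)=0$. Since $F(\cone(f))$ is a direct summand built from $F(x),F(y)$ and $F(f)=0$, tensoring the factorization of~$f\potimes{m}$ through $\cone(\eta_\unit)\potimes{m}$ with $\cone(f)$, and using that $\cone(f)\otimes UF(-)$ interacts with the summand decomposition, reduces $f\potimes{m}\otimes\cone(f)$ to something annihilated once $m$ is large; more precisely $\cone(\eta_\unit)\otimes\cone(f)$ is \tnil\ because $F$ kills the relevant map. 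Thus $f$ is \tnil\ on its cone, Question~\ref{que} has a positive answer, and Theorem~\ref{thm:main} finishes the proof.

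The main obstacle I anticipate is getting the bookkeeping of the two filtration steps (through $\cone(\eta_\unit)$ and through $UF(\unit)$, or equivalently, handling $\cone(f)\otimes\cone(\eta_\unit)$ versus $\cone(f)\otimes UF(\unit)$) to combine into a single finite power; this is where rigidity of~$\cat K$ and the projection formula $U(F(a)\otimes b)\cong U(F(a))\otimes b$ for $a\in\cat L$, $b\in\cat K$ — or rather $U(\ell)\otimes k \to U(\ell\otimes F(k))$ being an isomorphism when one of the objects is dualizable — must be invoked carefully to control nilpotence orders. The conceptual content is small once Theorem~\ref{thm:main} is in hand; the work is the adjunction-and-projection-formula diagram chase showing $F(f)=0$ forces $f$ to be \tnil\ on $\cone(f)$.
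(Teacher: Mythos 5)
Your plan — use Theorem~\ref{thm:main} to reduce the hard direction to a positive answer to Question~\ref{que} in the presence of the right adjoint~$U$ — is a genuinely different route from the paper's, and your first observations are correct: from $F(f)=0$ one gets $\eta_y\circ f=UF(f)\circ\eta_x=0$ (where $\eta$ denotes the unit of the $F\adj U$ adjunction), so $f$ factors through the fiber of $\eta_y$, and the projection formula identifies that fiber with $J\otimes y$, where $J\oto{\xi}\unit\to U(\unit)\to\Sigma J$ is the exact triangle on the adjunction unit at~$\unit$. This is exactly the factorization the paper's proof of Theorem~\ref{thm:FU} rests on. However, the remainder of your argument has a genuine gap.

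You never use the surjectivity of~$\varphi$ after the initial reduction via Theorem~\ref{thm:main}. If the diagram chase you sketch worked, it would prove that \emph{any} $f$ with $F(f)=0$ is \tnil\ on its cone, for \emph{any} tt-functor admitting a right adjoint, with no surjectivity hypothesis at all. That is false. Take $\cat{L}=\mathrm{D}^{\mathrm{perf}}(k)$ for a field~$k$, $\cat{K}=\mathrm{D}^{\mathrm{perf}}(k)\times\mathrm{D}^{\mathrm{perf}}(\bbZ)$, and $F$ the projection onto the first factor, whose right adjoint is $U(a)=(a,0)$. Take $f=(0,g)$ where $g=(2,1)\colon\bbZ\to\bbZ\oplus\bbZ$. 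Then $F(f)=0$ and $\cone(f)=(\unit_k\oplus\Sigma\unit_k,\bbZ)$, but $g$ is split injective, so $g\potimes{m}\neq 0$ for every~$m$; hence $f\potimes{m}\otimes\cone(f)\neq 0$ for every~$m$, and $f$ is not \tnil\ on its cone. (Here $\varphi$ is not surjective, consistent with the theorem.) So the pivotal unjustified claim in your sketch — that $\cone(\eta_\unit)\otimes\cone(f)$ is \tnil\ ``because $F$ kills the relevant map'' — cannot be obtained by an adjunction-plus-projection-formula chase alone; it genuinely requires the global surjectivity input.

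The missing ingredient is exactly what Theorem~\ref{thm:FU+} supplies: surjectivity forces $\supp(U(\unit))=\SpcK$, hence $\ideal{U(\unit)}=\cat{K}$, and Proposition~\ref{prop:spectra^3} identifies $\ideal{U(\unit)}$ with $\SET{z}{\xi\textrm{ is \tnil\ on }z}$. Thus $\xi$ is \tnil\ on every object of~$\cat{K}$, in particular on $\cone(f)$, and your factorization $f=(\xi\otimes y)\circ g$ then yields $f$ \tnil\ on $\cone(f)$, closing the gap. But observe that once Theorem~\ref{thm:FU+} and Proposition~\ref{prop:spectra^3} are invoked, the detour through Theorem~\ref{thm:main} and Question~\ref{que} is superfluous: the paper proves Theorem~\ref{thm:FU} directly by showing that $F$ detects \tnilce\ of morphisms if and only if $\xi$ is \tnil, if and only if $\ideal{U(\unit)}=\cat{K}$, and then citing Theorem~\ref{thm:FU+}. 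That path is shorter and avoids precisely the cone bookkeeping you flag as the obstacle.
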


Again, this is a special case of a sharper, slightly more technical result.

\begin{Thm}
\label{thm:FU+}%
Under Hypotheses~\ref{hyp}, suppose that $F\colon \cat{K}\to \cat{L}$ admits a right adjoint~$U\colon\cat{L}\to \cat{K}$ and consider the image~$U(\unit)\in\cat{K}$ of the $\otimes$-unit. Then the image of the map $\varphi\colon\SpcL\to \SpcK$ is exactly the support of the object~$U(\unit)$:
\[
\im(\Spc(F))=\supp(U(\unit))\,.
\]
\end{Thm}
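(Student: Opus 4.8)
The plan is to establish the two inclusions $\im(\varphi)\subseteq\supp(U(\unit))$ and $\supp(U(\unit))\subseteq\im(\varphi)$ separately, writing $A:=U(\unit)\in\cat K$ for brevity. The essential external ingredient is the projection formula: for a tensor-triangulated functor $F$ with right adjoint $U$, the natural morphism $U(y)\otimes x\to U(y\otimes F(x))$ is an isomorphism as soon as $x$ is rigid (see~\cite{BalmerDellAmbrogioSanders16}); since $\cat K$ is rigid (Hypotheses~\ref{hyp}), this yields a natural isomorphism $A\otimes x\isoto U(F(x))$ for every $x\in\cat K$. I will also use the elementary remark that $\unit_{\cat L}$ is a direct summand of $F(A)=F(U(\unit))$: one of the triangle identities for the adjunction gives $\epsilon_{\unit}\circ F(\eta_{\unit})=\Id_{\unit_{\cat L}}$, where $\eta$ and $\epsilon$ are the unit and counit, so $F(\eta_{\unit})\colon\unit_{\cat L}\to F(A)$ is a section of the counit $\epsilon_{\unit}\colon F(A)\to\unit_{\cat L}$.

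\emph{Inclusion $\im(\varphi)\subseteq\supp(A)$.} Let $\cQ\in\SpcL$ and set $\cP:=\varphi(\cQ)=F\inv(\cQ)$. If $A$ belonged to $\cP$ then $F(A)\in\cQ$; but $\cQ$ is thick, hence closed under direct summands, so the summand $\unit_{\cat L}$ of $F(A)$ would lie in $\cQ$, contradicting that $\cQ$ is a proper ideal. Hence $A\notin\cP$, that is, $\cP\in\supp(A)$.

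\emph{Inclusion $\supp(A)\subseteq\im(\varphi)$.} Fix $\cP\in\SpcK$ with $A\notin\cP$; the goal is to produce $\cQ\in\SpcL$ with $F\inv(\cQ)=\cP$. Let $\cat J:=\ideal{F(\cP)}$ be the thick $\otimes$-ideal of $\cat L$ generated by $\SET{F(p)}{p\in\cP}$, and let $\mathcal S:=\SET{F(a)}{a\in\cat K\oursetminus\cP}$, which contains $\unit_{\cat L}=F(\unit)$ and is closed under $\otimes$ because $\cat K\oursetminus\cP$ is. The crucial claim is that $\cat J\cap\mathcal S=\emptyset$. Indeed, suppose $F(a)\in\ideal{F(\cP)}$ for some $a\notin\cP$. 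Writing $\ideal{F(\cP)}$ as the thick closure of $\SET{F(p)\otimes l}{p\in\cP,\ l\in\cat L}$ and applying the exact, additive functor $U$, one gets that $U(F(a))$ lies in the thick closure of $\SET{U(F(p)\otimes l)}{p\in\cP,\ l\in\cat L}$; but $U(F(p)\otimes l)\cong p\otimes U(l)\in\cP$ by the projection formula, and $\cP$ is thick, so $U(F(a))\in\cP$. Since $U(F(a))\cong A\otimes a$, this gives $A\otimes a\in\cP$, and primality of $\cP$ together with $a\notin\cP$ forces $A\in\cP$, a contradiction. With $\cat J\cap\mathcal S=\emptyset$ in hand, the prime ideal theorem of~\cite{Balmer05a} furnishes a prime $\cQ$ of $\cat L$ with $\cat J\subseteq\cQ$ and $\cQ\cap\mathcal S=\emptyset$. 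Then $\cP\subseteq F\inv(\cQ)$ since $F(\cP)\subseteq\cat J\subseteq\cQ$, while $F\inv(\cQ)\subseteq\cP$ since $a\notin\cP$ implies $F(a)\in\mathcal S$, hence $F(a)\notin\cQ$. As $F\inv(\cQ)$ is automatically a prime of $\cat K$, we conclude $\cP=\varphi(\cQ)\in\im(\varphi)$.

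The routine verifications — that $\ideal{F(\cP)}$ equals the stated thick closure, that $\mathcal S$ is $\otimes$-multiplicative, and that $F\inv$ of a prime is prime — I would dispatch quickly. The load-bearing step is the disjointness $\cat J\cap\mathcal S=\emptyset$, where the projection formula is used to transport an ideal-membership statement in $\cat L$ back into $\cat K$; this is exactly where the rigidity of $\cat K$ is needed, and it is the point I would check most carefully, in particular the assertion that $U$, being exact and additive, carries the thick closure of the generators of $\ideal{F(\cP)}$ into $\cP$.
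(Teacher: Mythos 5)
Your proof is correct, and it takes a genuinely different route for the harder inclusion. The paper routes $\supp(U(\unit))\subseteq\im(\varphi)$ through its general-purpose Lemma~\ref{lem:key}, whose engine is the family of morphisms $\xi_x$ from Corollary~\ref{cor:key} (the ``homotopy fibres'' of coevaluations): one verifies the technical nilpotence condition~\eqref{eq:N} using the projection formula, and the Lemma then produces the prime $\cQ$. You bypass that machinery entirely and prove the disjointness $\ideal{F(\cP)}\cap F(\cat K\oursetminus\cP)=\varnothing$ directly: you identify $\ideal{F(\cP)}$ with the thick triangulated closure of $\SET{F(p)\otimes l}{p\in\cP,\,l\in\cat L}$, observe that the exact functor $U$ carries this thick closure into the thick closure of $\SET{U(F(p)\otimes l)}{p,l}$, which lands in $\cP$ by the projection formula, and then conclude $A\otimes a\cong UF(a)\in\cP$, forcing a contradiction with primality. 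This step is sound: $U\inv(\cP)$ is thick and triangulated because $U$ is exact and $\cP$ is thick, so it contains the thick closure of the generators. The trade-off is structural rather than mathematical. The paper's Lemma~\ref{lem:key} is designed to serve both Theorem~\ref{thm:main} (no adjoint available) and Theorem~\ref{thm:FU+}; it must therefore encode ``$F(s)\notin\ideal{F(\cP)}$'' purely in terms of morphisms in $\cat K$, which is what the $\xi_x$ do. With the adjoint $U$ in hand, your argument shows that detour is unnecessary: one can pull the ideal-membership back along $U$ and finish inside $\cat K$ at once. Your version is cleaner and more self-contained for this particular theorem, at the cost of not yielding the shared lemma used elsewhere in the paper. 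The easy inclusion $\im(\varphi)\subseteq\supp(U(\unit))$ you prove exactly as the paper does.
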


An example of the latter, not covered by the separable extensions of~\cite{Balmer16a}, can be obtained by `modding out' coefficients in motivic categories, see~\cite[Chap.\,5]{VoevodskySuslinFriedlander00}. For instance, if $\cat{K}=\DMc(X;\bbZ)\otoo{F}\DMc(X;\bbZ/p)=\cat{L}$ then we have $\im(\Spc(F))=\supp(\bbZ/p)$. From these techniques, one can easily reduce the computation of the (yet unknown) spectrum of the integral derived category of geometric motives~$\DMc(X,\bbZ)$ to the case of field coefficients:
\[
\Spc(\DMc(X;\bbZ))=\im(\Spc(\DMc(X;\bbQ)))\sqcup \ \bigsqcup_{p}\ \im(\Spc(\DMc(X;\bbZ/p)))\,.
\]
These considerations will be pursued elsewhere.

In the presence of a `big' ambient category, our condition of detecting $\otimes$-nilpotence could also be related to conservativity, as discussed in~\cite[Thm.\,4.19]{MathewNaumannNoel17}.

\medbreak

Let us now state a direct consequence of Theorem~\ref{thm:surj-nil}, that was apparently never noticed despite its importance and simplicity. It is the case where $F$ is faithful.
\begin{Cor}
\label{cor:surj-ff}%
Suppose that $\cat{K}\subset\cat{L}$ is a rigid tensor-triangulated subcategory. Then every prime $\cP\in\SpcK$ is the intersection of a prime~$\cQ\in\SpcL$ with~$\cat{K}$.
\end{Cor}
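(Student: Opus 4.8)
The plan is to deduce this from Theorem~\ref{thm:surj-nil} by checking its hypothesis in the case of a fully faithful inclusion $F\colon\cat{K}\hookrightarrow\cat{L}$. Concretely, the inclusion of a tensor-triangulated subcategory $\cat{K}\subset\cat{L}$ is a tensor-triangulated functor, it is faithful (being fully faithful), and $\cat{K}$ is assumed rigid, so Hypotheses~\ref{hyp} hold. The map $\varphi=\Spc(F)$ sends a prime $\cQ\in\SpcL$ to $\cQ\cap\cat{K}\in\SpcK$, by the very definition of the functoriality of $\Spc(-)$ on a faithful tt-functor. Hence the statement ``every $\cP\in\SpcK$ equals $\cQ\cap\cat{K}$ for some $\cQ\in\SpcL$'' is precisely the surjectivity of~$\varphi$.

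So it remains to verify that a faithful tt-functor detects $\otimes$-nilpotence of morphisms — in fact it detects the vanishing of morphisms outright, which is a fortiori enough. Indeed, if $f\colon x\to y$ is a morphism in~$\cat{K}$ with $F(f)=0$, then since $F$ is faithful we immediately get $f=0$, hence $f\potimes{n}=0$ for $n=1$. This is the trivial instance of the hypothesis of Theorem~\ref{thm:surj-nil}, and the conclusion of that theorem gives surjectivity of~$\varphi$, which is what we wanted.

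I do not anticipate any genuine obstacle here; the only point requiring a word of care is the identification $\varphi(\cQ)=\cQ\cap\cat{K}$ for a fully faithful inclusion. This is standard — it is how $\Spc$ is defined on morphisms — but one might wish to note that for a faithful functor the preimage of a prime ideal is computed by intersection, so that $\varphi$ restricted along the inclusion is literally $\cQ\mapsto\cQ\cap\cat{K}$. Once that is recorded, the corollary is an immediate application of Theorem~\ref{thm:surj-nil}.
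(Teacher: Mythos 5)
Your argument is exactly the one the paper intends: faithfulness gives $F(f)=0\Rightarrow f=0$, which trivially verifies the hypothesis of Theorem~\ref{thm:surj-nil}, and $\varphi(\cQ)=F^{-1}(\cQ)=\cQ\cap\cat{K}$ for an inclusion. Correct, and same route as the paper.
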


A special sub-case of interest is that of `cellular' subcategories, \ie those $\cat{K}\subseteq\cat{L}$ generated by a collection of `nice' objects of~$\cat{L}$, typically $\otimes$-invertible ones (spheres). Such cellular subcategories~$\cat{K}$ are commonly studied when the ambient~$\cat{L}$ appears out-of-reach of known methods. For instance, Dell'Ambrogio~\cite{DellAmbrogio10} used this approach for equivariant $KK$-theory, and later with Tabuada~\cite{DellAmbrogioTabuada12} for non-commutative motives. Peter~\cite{Peter13} discusses the case of mixed Tate motives. Similarly, Heller-Ormsby~\cite{HellerOrmsby16pp} consider cellular subcategories in their recent study of tt-geometry in stable motivic homotopy theory. In all cases, Corollary~\ref{cor:surj-ff} says that whatever can be detected via these cellular subcategories~$\cat{K}$ is actually relevant information about the bigger and more mysterious ambient category~$\cat{L}$. In particular, surjectivity of the comparison homomorphisms introduced in~\cite{Balmer10b} can be tested on the cellular subcategory:
\begin{Cor}
\label{cor:surj-rho}%
Let $u\in\cat{L}$ be a $\otimes$-invertible object and $\cat{K}$ the full thick triangulated subcategory of~$\cat{L}$ generated by~$\SET{u\potimes{n}}{n\in\bbZ}$, which is supposed rigid\,\textrm{\rm(\footnote{\,This is automatic if $\cat{L}$ lives in a `big' ambient category with internal hom, where rigid objects are closed under triangles. See~\cite[Thm.\,A.2.5\,(a)]{HoveyPalmieriStrickland97}.})}. Note that the graded rings $R^\sbull_{\cat{K},u}$ and $R^\sbull_{\cat{L},u}$ associated to~$u$ are the same in~$\cat{K}$ and in~$\cat{L}$:
\[
R^\sbull_{\cat{K},u}\ \overset{\textrm{def}}=\ \Homcat{K}(\unit, u\potimes{\sbull})=\Homcat{L}(\unit, u\potimes{\sbull})\ \overset{\textrm{def}}=\ R^\sbull_{\cat{L},u}\,.
\]
If the comparison map $\rho^\sbull_{\cat{K},u}$ for~$\cat{K}$ (recalled below) is surjective for the `cellular' subcategory~$\cat{K}$ then the comparison map $\rho^\sbull_{\cat{L},u}$ for the ambient~$\cat{L}$ is also surjective:
\[
\xymatrix@R=2em{
\SpcL \ \ar@{->>}[r]^-{\textrm{Cor.\,\ref{cor:surj-ff}}} \ar[d]_-{\rho^\sbull_{\cat{L},u}} \ar@{}[rd]|-{\circlearrowright}
& \ \SpcK \ar[d]^-{\rho^\sbull_{\cat{K},u}}
& \cP \ar@{|->}[d]_-{\rho^\sbull_{\cat{K},u}}  \ar@{}[l]|(.7){\ni}
\\
\Spec^\sbull(R^\sbull_{\cat{L},u}) \ \ar@{=}[r]
& \ \Spec^\sbull(R^\sbull_{\cat{K},u})
& \rho^\sbull_{\cat{K},u}(\cP)\ \overset{\textrm{def}}=\ \SET{f\in R^\sbull_{\cat{K},u}}{\cone(f)\notin\cP} \ar@{}[l]|(.7){\ni} \,.\!\!
}
\]
\end{Cor}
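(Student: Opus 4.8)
The plan is to deduce the corollary formally from Corollary~\ref{cor:surj-ff} together with the naturality of the comparison map~$\rho^\sbull$ of~\cite{Balmer10b} along the inclusion $F\colon\cat{K}\hookrightarrow\cat{L}$; throughout I write $\varphi=\Spc(F)$, so that $\varphi(\cQ)=\cQ\cap\cat{K}$ for every $\cQ\in\SpcL$.

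First I would check that Corollary~\ref{cor:surj-ff} genuinely applies, \ie that $\cat{K}$ is a \emph{tensor}-triangulated subcategory of~$\cat{L}$. It contains $\unit=u\potimes{0}$, and it is closed under~$\otimes$: for a fixed power~$u\potimes{m}$, the full subcategory $\SET{a\in\cat{L}}{a\otimes u\potimes{m}\in\cat{K}}$ is thick and triangulated since $-\otimes u\potimes{m}$ is an exact self-equivalence, and it contains every~$u\potimes{n}$, hence contains all of~$\cat{K}$; feeding this into the same argument in the other variable gives $\cat{K}\otimes\cat{K}\subseteq\cat{K}$. Since $\cat{K}$ is rigid by hypothesis and $u=u\potimes{1}$ lies in~$\cat{K}$, the comparison map $\rho^\sbull_{\cat{K},u}$ is defined and Corollary~\ref{cor:surj-ff} applies: $\varphi\colon\SpcL\to\SpcK$ is surjective.

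Next I would verify that the square in the statement commutes. Let $f\in R^\sbull_{\cat{K},u}=R^\sbull_{\cat{L},u}$. Its cone is an object of~$\cat{K}$, computed identically in~$\cat{K}$ and in~$\cat{L}$ because $\cat{K}$ is a triangulated subcategory of~$\cat{L}$. Hence, for any $\cQ\in\SpcL$, the condition $\cone(f)\notin\varphi(\cQ)=\cQ\cap\cat{K}$ is equivalent to $\cone(f)\notin\cQ$, so that by the very definition of the comparison maps

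\[
\rho^\sbull_{\cat{K},u}\big(\varphi(\cQ)\big)=\rho^\sbull_{\cat{L},u}(\cQ)
\]

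under the identification $\Spec^\sbull(R^\sbull_{\cat{K},u})=\Spec^\sbull(R^\sbull_{\cat{L},u})$. Consequently $\rho^\sbull_{\cat{L},u}=\rho^\sbull_{\cat{K},u}\circ\varphi$ is the composite of the surjection~$\varphi$ (by the previous step) with the map~$\rho^\sbull_{\cat{K},u}$, which is surjective by hypothesis; therefore $\rho^\sbull_{\cat{L},u}$ is surjective, as claimed.

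There is no real obstacle here: the only points deserving attention are the (easy) verification that $\cat{K}$ is stable under~$\otimes$ — so that Corollary~\ref{cor:surj-ff} applies and $\rho^\sbull_{\cat{K},u}$ is defined — and the observation, underlying the commutativity of the square, that forming the cone of a morphism of~$R^\sbull$ never leaves~$\cat{K}$.
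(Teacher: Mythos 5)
Your proposal is correct and takes essentially the same route the paper leaves implicit: the corollary follows by composing the surjection $\varphi=\Spc(F)$ from Corollary~\ref{cor:surj-ff} with the surjection $\rho^\sbull_{\cat{K},u}$, once the square is known to commute, and the commutativity is exactly the naturality of the comparison map $\rho^\sbull$ along a tt-functor (here the inclusion $\cat{K}\hookrightarrow\cat{L}$) observed in~\cite{Balmer10b}. The two points you single out — that $\cat{K}$ is genuinely a tensor-triangulated subcategory (so that Corollary~\ref{cor:surj-ff} applies and $\rho^\sbull_{\cat{K},u}$ is defined), and that $\cone(f)$ lies in~$\cat{K}$ and is computed the same way in either category — are precisely the details worth checking, and your arguments for both are correct.
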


For an introduction to these comparison maps and their importance, the reader is invited to consult the above references~\cite{Balmer10b,DellAmbrogio10,DellAmbrogioTabuada12,HellerOrmsby16pp} or~\cite{Sanders13}.

\medbreak
\noindent
\textbf{Acknowledgments}: I am thankful to Beren Sanders for observing in a previous version of this article that my proof of surjectivity of~$\varphi$ reduced to $F$ detecting nilpotence of morphisms of the form $\eta_x\otimes y\colon y\to x^\vee\otimes x\otimes y$. Beren's idea led me to the `morphisms which are nilpotent on their cone' and to Theorem~\ref{thm:main}. I also thank Ivo Dell'Ambrogio, Martin Gallauer, Jeremiah Heller and Kyle Ormsby for their comments.

\goodbreak
\section{The proofs}
\label{se:proofs}%
\medbreak

The tensor $\otimes\colon\cat{K}\times\cat{K}\too \cat{K}$ is exact in each variable and~$\unit$ stands for the $\otimes$-unit in~$\cat{K}$. Recall that a \emph{tt-ideal $\cat{J}\subseteq \cat{K}$} is a triangulated, thick, $\otimes$-ideal subcategory, \ie it is non-empty, is closed under taking cones, direct summands and under tensoring by any object of~$\cat{K}$. For $\mathcal{E}\subseteq \cat{K}$, we denote by~$\ideal{\mathcal{E}}\subseteq\cat{K}$ the tt-ideal it generates.

A proper tt-ideal $\cP\subsetneq\cat{K}$ is \emph{prime} if $x\otimes y\in\cP$ implies $x\in\cP$ or $y\in\cP$. The \emph{spectrum} $\SpcK=\SET{\cP\subset\cat{K}}{\cP\textrm{ is prime}}$ has a topology whose basis of open is given by the subsets $U(x)=\SET{\cP\in\SpcK}{x\in \cP}$, for every~$x\in\cat{K}$. The closed complement $\supp(x)=\SET{\cP\in\SpcK}{x\notin\cP}$ is called the \emph{support} of the object~$x$. A tensor-triangulated functor $F\colon\cat{K}\to\cat{L}$ induces a continuous map $\varphi=\Spc(F)\colon\SpcL\to \SpcK$ given explicitly by~$\varphi(\cQ)=F\inv(\cQ)$, for every prime~$\cQ\subset\cat{L}$.

\begin{Rem}
\label{rem:rigid}%
Our assumption that the tensor category~$\cat{K}$ is \emph{rigid}, means that there exists an exact functor called the \emph{dual}
\[
(-)^\vee\colon \cat{K}\op \too \cat{K}
\]
that provides an adjoint to tensoring with any object $x\in \cat{K}$ as follows:
\begin{equation}
\label{eq:adj-dual}
\vcenter{\xymatrix@R=2em{
\cat{K} \ar@<-.5em>[d]_-{x\otimes-} \ar@{}[d]|-{\adj}
\\
\cat{K} \ar@<-.5em>[u]_-{x^\vee\otimes-}
}}
\end{equation}
Some authors call such objects~$x$ \emph{strongly dualizable}, \eg~\cite{HoveyPalmieriStrickland97}. The adjunction~\eqref{eq:adj-dual} comes with units (coevaluation) and counits (evaluation)
\begin{equation}
\label{eq:(co)units}%
\eta_x\colon\unit\to x^\vee\otimes x
\qquadtext{and}
\eps_x\colon x\otimes x^\vee\to \unit
\end{equation}
which satisfy the relation
\begin{equation}
\label{eq:(co)unit-relation}%
(\eps_x\otimes x)\circ (x\otimes \eta_x)=1_x\,.
\end{equation}
It follows from~\eqref{eq:(co)unit-relation} that $x$ is a direct summand of $x\otimes x^\vee \otimes x\cong x\potimes{2}\otimes x^\vee$.

It is a general fact that any tensor functor $F\colon\cat{K}\to \cat{L}$ preserves rigidity, since we can use $F(x^\vee)$ as $F(x)^\vee$ with $F(\eta_x)$ and $F(\eps_x)$ as units and counits. See for instance~\cite[Prop.\,3.1]{FauskHuMay03}. In particular, although we do not assume $\cat{L}$ rigid, every object we use below will be rigid as long as it comes from~$\cat{K}$.
\end{Rem}

\begin{Rem}
\label{rem:non-rigid}%
In a not-necessarily rigid tt-category, an object~$x$ with empty support, $\supp(x)=\varnothing$, is \tnil, \ie $x\potimes{n}=0$ for some $n\ge 1$. See~\cite[Cor.\,2.4]{Balmer05a}. When $x$ is rigid, $x\potimes{n}=0$ forces $x=0$ since $x$ is a summand of $x\potimes{n}\otimes (x^\vee)\potimes{(n-1)}$.
\end{Rem}

\smallbreak

We begin with Theorem~\ref{thm:surj-closed}, which is relatively straightforward. We only need a few standard facts from basic tt-geometry, which do not use rigidity, namely:
\begin{enumerate}[\rm(A)]
\item
\label{it:A}%
Given a $\otimes$-multiplicative class~$S$ of objects in~$\cat{K}$ (\ie $\unit\in S$ and $x,y\in S \then x\otimes y\in S$) and a tt-ideal $\cat{J}\subset\cat{K}$ such that $\cat{J}\cap S=\varnothing$, then there exists a prime $\cP\in\SpcK$ such that $\cat{J}\subseteq\cP$ and $\cP\cap S=\varnothing$. This fact uses that $\cat{K}$ is essentially small and is proven in~\cite[Lemma~2.2]{Balmer05a}.
\smallbreak
\item
\label{it:B}%
A point $\cP\in\SpcK$ is closed if and only $\cP$ is a \emph{minimal} prime for inclusion in~$\cat{K}$ (\ie $\cP'\subseteq\cP\then \cP'=\cP$). See~\cite[Prop.\,2.9]{Balmer05a}.
\smallbreak
\item
\label{it:C}%
Any non-empty closed subset, for instance $\adhpt{\cP}$ for a point~$\cP$, or $\supp(x)$ for a non-trivial object~$x$, contains a closed point. See~\cite[Cor.\,2.12]{Balmer05a}.
\smallbreak
\item
\label{it:D}%
For $F\colon\cat{K}\to \cat{L}$ and $\varphi=\Spc(F)\colon\SpcL\to \SpcK$, and every object~$x\in\cat{K}$, we have $\supp(F(x))=\varphi\inv(\supp(x))$ in~$\SpcL$. See~\cite[Prop.\,3.6]{Balmer05a}.
\end{enumerate}

\begin{proof}[Proof of Theorem~\ref{thm:surj-closed}]
Suppose that $F\colon\cat{K}\to \cat{L}$ is conservative and let $\cP\in\SpcK$ be a closed point, \ie a minimal prime. Consider its complement $S=\cat{K}\oursetminus\cP$. Since $\cP$ is prime, $S$ is $\otimes$-multiplicative in~$\cat{K}$ and does not contain zero. Since $F$ is a conservative tensor functor, the same holds for the class $F(S)$ in~$\cat{L}$. (Recall that for a triangulated functor~$F$, conservativity is equivalent to $F(x)=0\then x=0$, since a morphism is an isomorphism if and only if its cone is zero.) By the general fact~\eqref{it:A} recalled above, for the $\otimes$-multiplicative class $F(S)$ and for the tt-ideal $\cat{J}=0$ in~$\cat{L}$, there exists a prime~$\cQ\in\SpcL$ such that $\cQ\cap F(S)=\varnothing$. This relation implies that $F\inv(\cQ)\subseteq \cP$. By minimality of the closed point~$\cP$, see~\eqref{it:B}, this inclusion $F\inv(\cQ)\subseteq \cP$ forces $\cP=F\inv(\cQ)=\varphi(\cQ)$.

Conversely, suppose that $\varphi\colon\SpcL\to \SpcK$ is surjective on closed points and let $x\in\cat{K}$ be such that $F(x)=0$. We want to show that $x=0$. Suppose \ababs\ that $x\neq 0$. Then we have $\supp(x)\neq\varnothing$. By~\eqref{it:C}, we know that there exists a closed point $\cP\in\supp(x)$, which by assumption belongs to the image of~$\varphi$, say $\cP=\varphi(\cQ)$. But then $\cQ\in\varphi\inv(\supp(x))=\supp(F(x))$ by~\eqref{it:D}. This last statement contradicts $\supp(F(x))=\supp(0)=\varnothing$. So $x=0$ as claimed.
\end{proof}

\begin{Rem}
The proof also gives a statement for $\cat{K}$ not rigid. In that case, the property $\supp(x)=\varnothing$ does not necessarily imply that $x=0$ but that $x$ is \tnil, as an object. See Remark~\ref{rem:non-rigid}. Surjectivity of $\varphi$ onto closed points is therefore equivalent to $F$ detecting \tnilce\ of objects. See Theorem~\ref{thm:surj-closed}\,(\ref{it:a'}').
\end{Rem}

\begin{Rem}
In complete generality, if a closed point $\cP\in\SpcK$ belongs to the image of $\varphi\colon\SpcL\to \SpcK$, say $\cP=\varphi(\cQ)$, then $\cP$ is also the image of a \emph{closed} point~$\cQ'$, which can be chosen in the closure of~$\cQ$. Indeed, there exists a closed point $\cQ'\in\adhpt{\cQ}$ by~\eqref{it:C} and continuity of~$\varphi$ implies $\varphi(\cQ')\in\adhpt{\cP}=\{\cP\}$.
\end{Rem}

\begin{center}*\ *\ *\end{center}

We now turn to the slightly more tricky Theorem~\ref{thm:main}. Let us clarify the following:
%
\begin{Def}
\label{def:tens-nil}%
A morphism $f\colon x\to y$ is called \emph{\tnil}\  if $f\potimes{n}\colon x\potimes{n}\to y\potimes{n}$ is zero for some~$n\ge 1$.
We say that $f\colon x\to y$ is \emph{$\otimes$-nilpotent\ on an object~$z$} in~$\cat{K}$ if there exists $n\ge 1$ such that $f\potimes{n}\otimes z$ is the zero morphism $x\potimes{n}\otimes z\to y\potimes{n}\otimes z$. In particular, $f$ is \emph{\tnil\ on its cone} if there exists $n\ge 1$ such that $f\potimes{n}\otimes \cone(f)=0$.
\end{Def}

The following useful fact was already observed in~\cite[Prop.\,2.12]{Balmer10b}:
\begin{Prop}
\label{prop:nil-tt-ideal}%
Let $f\colon x\to y$ be a morphism in~$\cat{K}$. Then
\[
\SET{z\in\cat{K}}{f\textrm{ is \tnil\ on }z}
\]
forms a tt-ideal, even if $\cat{K}$ is not rigid.
\end{Prop}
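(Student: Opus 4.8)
The plan is to show that the collection $\mathcal{Z} = \SET{z\in\cat{K}}{f\textrm{ is \tnil\ on }z}$ satisfies the three closure properties defining a tt-ideal: it is thick (closed under cones, hence under shifts, and under direct summands) and it is a $\otimes$-ideal (closed under tensoring with arbitrary objects of~$\cat{K}$). The $\otimes$-ideal property is immediate: if $f\potimes{n}\otimes z = 0$ then tensoring with any $w\in\cat{K}$ gives $f\potimes{n}\otimes (z\otimes w) = (f\potimes{n}\otimes z)\otimes w = 0$, so $z\otimes w\in\mathcal{Z}$. Non-emptiness is clear since $0\in\mathcal{Z}$ (indeed $f\potimes{1}\otimes 0 = 0$).

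For closure under direct summands, suppose $z = z_1\oplus z_2\in\mathcal{Z}$, so $f\potimes{n}\otimes(z_1\oplus z_2) = 0$ for some $n$; since $f\potimes{n}\otimes z_1$ is a direct summand (a retract) of $f\potimes{n}\otimes(z_1\oplus z_2)$ as a morphism, it too is zero, so $z_1\in\mathcal{Z}$. For closure under cones, the key observation is the following: if $g\colon z_1\to z_2$ is any morphism with $z_1, z_2\in\mathcal{Z}$ and $z_3 = \cone(g)$, then I claim $z_3\in\mathcal{Z}$. Say $f\potimes{n_1}\otimes z_1 = 0$ and $f\potimes{n_2}\otimes z_2 = 0$; set $n = n_1 + n_2$. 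Tensoring the distinguished triangle $z_1\to z_2\to z_3\to z_1[1]$ with $x\potimes{n}$ and with $y\potimes{n}$, and comparing via the morphism $f\potimes{n}$, one gets a commutative diagram of triangles. The map $f\potimes{n}\otimes z_3$ fits into a situation where it is "sandwiched" by $f\potimes{n}\otimes z_2$ on one side and $f\potimes{n}\otimes z_1[1]$ on the other. More precisely, write $f\potimes{n} = (f\potimes{n_1})\otimes(f\potimes{n_2})$. Then $f\potimes{n}\otimes z_3 = (f\potimes{n_1}\otimes y\potimes{n_2}\otimes z_3)\circ(x\potimes{n_1}\otimes f\potimes{n_2}\otimes z_3)$, so it suffices to factor one of these two composites through zero.

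The hard part — and the heart of the argument — is the cone step, which I expect to be the main obstacle. The clean way to handle it is a standard "two-out-of-three" style lemma for nilpotence along triangles: if a morphism $\alpha\colon A\to B$ of triangles restricts to $0$ on the first and second terms (or first and third), then $\alpha$ raised to a suitable power is $0$ on the remaining term. Concretely, for the triangle $z_1\oto{g} z_2\to z_3\oto{h} z_1[1]$, tensor everything in sight by powers of $f$: the morphism $x\potimes{n_2}\otimes f\potimes{n_2}\otimes (-)$... rather, consider $u := x^{\otimes n_1}\otimes f^{\otimes n_2}\otimes z_3\colon x^{\otimes n}\otimes z_3\to x^{\otimes n_1}\otimes y^{\otimes n_2}\otimes z_3$. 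Precompose with $x^{\otimes n}\otimes h[-1]$-type maps and use that $f^{\otimes n_2}\otimes z_2 = 0$: because $h\colon z_3\to z_1[1]$ and $z_1\in\mathcal{Z}$, the morphism $f^{\otimes n_1}\otimes z_3$ factors (after tensoring the triangle by $y^{\otimes n_1}$ on the target) through $y^{\otimes n_1}\otimes z_2$, on which $f^{\otimes n_2}$ vanishes; dually $f^{\otimes n_2}\otimes z_3$ factors through $y^{\otimes n_2}\otimes z_1[1]$, on which $f^{\otimes n_1}$ vanishes. Composing these two factorizations shows $f^{\otimes n}\otimes z_3 = f^{\otimes n_1}\otimes f^{\otimes n_2}\otimes z_3 = 0$. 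This is exactly the argument of~\cite[Prop.\,2.12]{Balmer10b} referenced in the statement; I would either reproduce it in this form or simply cite it. Finally, closure under shifts follows from closure under cones (or directly: $f^{\otimes n}\otimes z[1] = (f^{\otimes n}\otimes z)[1] = 0$), completing the verification that $\mathcal{Z}$ is a tt-ideal.
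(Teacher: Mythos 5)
Your argument is correct and is essentially the paper's: the cone step is handled via the same $n_1+n_2$ trick, which the paper only states and you spell out in detail. One small wording caveat: you say \emph{composing these two factorizations}, but a single one already suffices — for instance that $f\potimes{n_1}\otimes z_3$ factors through $y\potimes{n_1}\otimes z_2$, on which $f\potimes{n_2}$ vanishes — the ``dual'' factorization is an alternative route, not a second ingredient to be composed with the first.
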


\goodbreak

Closure under direct summands and $\otimes$ is clear from the definition. The trick for closure under cones, is that if $f\potimes{n_i}\otimes z_i=0$ for $i=1,2$ and if $z_1\to z_2\to z_3\to \Sigma z_1$ is an exact triangle, then $f\potimes{(n_1+n_2)}\otimes z_3$ will vanish. This is the place where the same statement would fail with `$f$ vanishes on~$z$' (instead of `$f$ \tnil\ on~$z$').

\begin{Prop}
\label{prop:spectra^3}%
Let $\xi\colon w\to \unit$ be a morphism in~$\cat{K}$ (not necessarily rigid) such that $\xi\otimes\cone(\xi)=0$. Then the cone of~$\xi\potimes{n}$ generates the same tt-ideal, for all~$n$:
\[
\ideal{\cone(\xi)}=\SET{z\in\cat{K}}{\xi\textrm{ is \tnil\ on }z}=\ideal{\cone(\xi\potimes{n})}\,.
\]
\end{Prop}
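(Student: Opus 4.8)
The plan is to prove the two inclusions $\ideal{\cone(\xi)}\supseteq\ideal{\cone(\xi\potimes{n})}$ and $\ideal{\cone(\xi)}\subseteq\SET{z}{\xi\textrm{ is }\otimes\textrm{-nilpotent on }z}$ together with the reverse containments, using Proposition~\ref{prop:nil-tt-ideal} to handle the middle set. Write $\cJ=\SET{z\in\cat{K}}{\xi\textrm{ is \tnil\ on }z}$; by Proposition~\ref{prop:nil-tt-ideal} this is a tt-ideal. The hypothesis $\xi\otimes\cone(\xi)=0$ says precisely that $\cone(\xi)\in\cJ$, hence $\ideal{\cone(\xi)}\subseteq\cJ$. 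Conversely, if $\xi$ is \tnil\ on $z$, say $\xi\potimes{n}\otimes z=0$, I want to deduce $z\in\ideal{\cone(\xi)}$. The point is that the exact triangle $w\oto{\xi}\unit\to\cone(\xi)\to\Sigma w$ tensored with $z$ shows that $z$ is a retract of $\cone(\xi)\otimes z$ up to the map $\xi\otimes z$: more precisely, since $\xi\potimes{n}\otimes z=0$, an easy induction on $n$ (octahedron / standard ``vanishing composite'' argument) shows $z$ is a direct summand of $\cone(\xi\potimes{n})\otimes z$, and $\cone(\xi\potimes{n})$ lies in $\ideal{\cone(\xi)}$ because $\xi\potimes{n}$ factors as a composite $\unit\to\unit\to\dots\to\unit$ of copies of $\xi\otimes(\text{something})$, whose cone is built from copies of $\cone(\xi)$ tensored with objects. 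That gives $\cJ\subseteq\ideal{\cone(\xi)}$, hence the first two sets agree.

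For the last equality, first note $\cone(\xi\potimes{n})\in\ideal{\cone(\xi)}$ by the factorization just mentioned, so $\ideal{\cone(\xi\potimes{n})}\subseteq\ideal{\cone(\xi)}$. For the reverse inclusion it suffices to show $\cone(\xi)\in\ideal{\cone(\xi\potimes{n})}$, equivalently (by what we just proved, applied to the element $z=\cone(\xi)$) that $\xi$ is \tnil\ on $\cone(\xi\potimes{n})$ — but $\xi$ is already \tnil\ on $\cone(\xi)$ by hypothesis, and $\SET{z}{\xi\textrm{ \tnil\ on }z}$ being a tt-ideal containing $\cone(\xi)$ contains everything in $\ideal{\cone(\xi)}$, in particular $\cone(\xi\potimes{n})$; then the first equality of the proposition (now established) reads $\cJ=\ideal{\cone(\xi\potimes{n})}$, which immediately gives $\cone(\xi)\in\ideal{\cone(\xi\potimes{n})}$. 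Thus all three sets coincide.

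I expect the main obstacle to be the precise bookkeeping in the claim that $z$ is a direct summand of $\cone(\xi\potimes{n})\otimes z$ whenever $\xi\potimes{n}\otimes z=0$. The cleanest route is induction on $n$: the case $n=1$ says that if $\xi\otimes z=0$ then the triangle $w\otimes z\oto{0}z\to\cone(\xi)\otimes z\to\Sigma w\otimes z$ splits, so $z$ is a summand of $\cone(\xi)\otimes z$; for the inductive step, factor $\xi\potimes{n+1}$ through $\xi\potimes{n}\otimes\unit$ and use the octahedral axiom to express $\cone(\xi\potimes{n+1})$ in terms of $\cone(\xi\potimes{n})$ and $\cone(\xi)$ (tensored with appropriate objects), then feed the vanishing $\xi\potimes{n+1}\otimes z=0$ through. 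One must be a little careful that ``factors as a composite of copies of $\xi$'' really does force the cone into $\ideal{\cone(\xi)}$ — this is the standard fact that the cone of a composite $g\circ f$ sits in a triangle with $\cone(f)$ and $\cone(g)$, iterated — but none of this uses rigidity, consistent with the parenthetical remark in the statement.
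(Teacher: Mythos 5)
Your overall scheme matches the paper's (use Proposition~\ref{prop:nil-tt-ideal} for one inclusion, the split triangle for another, and an argument that $\cone(\xi\potimes{n})\in\ideal{\cone(\xi)}$ for the third), but two of your steps diverge and the third has a genuine gap.

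First, the splitting step needs no induction at all. If $\xi\potimes{n}\otimes z=0$, then the exact triangle $w\potimes{n}\otimes z\otoo{0} z\to\cone(\xi\potimes{n})\otimes z\to\Sigma w\potimes{n}\otimes z$ is split for the same one-line reason as your $n=1$ case, so $z$ is a summand of $\cone(\xi\potimes{n})\otimes z$ directly; the inductive machinery you propose is unnecessary. Second, for $\cone(\xi\potimes{n})\in\ideal{\cone(\xi)}$, the paper passes to the Verdier quotient $\cat{K}/\ideal{\cone(\xi)}$, where $\xi$ and hence $\xi\potimes{n}$ become isomorphisms; your alternative via the octahedral-axiom triangle $\cone(f)\to\cone(gf)\to\cone(g)$ iterated over the factorization $\xi\potimes{n}=\xi\circ(\xi\otimes w)\circ\cdots\circ(\xi\otimes w\potimes{(n-1)})$ is perfectly valid, just more hands-on. (A small slip: the composite runs $w\potimes{n}\to w\potimes{(n-1)}\to\cdots\to w\to\unit$, not $\unit\to\cdots\to\unit$.)

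The real gap is in your proof of $\ideal{\cone(\xi)}\subseteq\ideal{\cone(\xi\potimes{n})}$. You assert that $\cone(\xi)\in\ideal{\cone(\xi\potimes{n})}$ is ``equivalently\ldots that $\xi$ is $\otimes$-nilpotent on $\cone(\xi\potimes{n})$,'' but no such equivalence was established; $\xi$ being $\otimes$-nilpotent on some $z$ puts $z$ into $\ideal{\cone(\xi\potimes{m})}$ for \emph{some} $m$, not the prescribed $n$. And the conclusion ``the first equality\ldots reads $\cJ=\ideal{\cone(\xi\potimes{n})}$'' is the statement you are trying to prove, so the argument is circular. The clean fix is the observation that $\xi\otimes\cone(\xi)=0$ forces $\xi\potimes{n}\otimes\cone(\xi)=0$ for every $n$ (since $\xi\potimes{n}\otimes\cone(\xi)$ factors through $\xi\otimes\cone(\xi)$), and then the splitting step applied with $z=\cone(\xi)$ yields $\cone(\xi)\in\ideal{\cone(\xi\potimes{n})}$ at once.
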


\begin{proof}
The assumption $\xi\otimes \cone(\xi)=0$ implies that the object $\cone(\xi)$ belongs to~$\SET{z\in\cat{K}}{\xi\textrm{ is \tnil\ on }z}$, which is a tt-ideal by Proposition~\ref{prop:nil-tt-ideal}. On the other hand, if the morphism $\xi\potimes{n}\otimes z$ is zero then the exact triangle
\[
\xymatrix@C=2em{
w\potimes{n}\otimes z \ar[rr]^-{\xi\potimes{n}\otimes z=0}
&& z \ar[r]
& \cone(\xi\potimes{n})\otimes z \ar[r]
& \Sigma w\potimes{n}\otimes z
}
\]
implies that $z$ is a summand of~$\cone(\xi\potimes{n})\otimes z$. Hence $z$ belongs to~$\ideal{\cone(\xi\potimes{n})}$. Finally, in the Verdier quotient $\cat{K}/\ideal{\cone(\xi)}$, the morphism $\xi$ is an isomorphism, hence so is~$\xi\potimes{n}$. Therefore $\cone(\xi\potimes{n})\in\ideal{\cone(\xi)}$. In short, we have obtained
\[
\ideal{\cone(\xi)} \subseteq \SET{z\in\cat{K}}{\xi\potimes{n}\otimes z=0\textrm{ for some }n\ge1}\subseteq \cup_{n\ge1}\ideal{\cone(\xi\potimes{n})}
\subseteq\ideal{\cone(\xi)}\,.
\]
%
This proves the claim. Compare~\cite[\S\,2]{Balmer10b}.
\end{proof}

We can now establish the key observation of the paper:
\begin{Cor}
\label{cor:key}%
Let $x\in\cat{K}$ be a rigid object in a (not necessarily rigid) tt-category~$\cat{K}$. Choose $\xi_x$ a `homotopy fiber' of the coevaluation morphism~$\eta_x$ of~\eqref{eq:(co)units}, \ie choose an exact triangle in~$\cat{K}$
\begin{equation}
\label{eq:triangle}
\xymatrix{
w_x \ar[r]^-{\displaystyle\xi_x}
& \unit \ar[r]^-{\displaystyle\eta_x}
& x^\vee\otimes x \ar[r]^-{}
& \Sigma w_x
}
\end{equation}
for a morphism~$\xi_x$. Then the tt-ideal $\ideal{x}$ generated by our object is exactly the subcategory on which $\xi_x$ is $\otimes$-nilpotent:
\begin{equation}
\label{eq:key}%
\ideal{x} = \SET{z\in\cat{K}}{\xi_x\potimes{n}\otimes z=0\textrm{ for some }n\ge1}\,.
\end{equation}
Moreover, for every $n\ge1$ the morphism $\xi_x\potimes{n}$ is \tnil\ on its cone.
\end{Cor}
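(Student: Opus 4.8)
The plan is to apply Proposition~\ref{prop:spectra^3} to the morphism $\xi = \xi_x$, which requires first verifying its hypothesis $\xi_x \otimes \cone(\xi_x) = 0$. Since $\cone(\xi_x) \cong x^\vee \otimes x$ by the triangle~\eqref{eq:triangle}, this amounts to showing $\xi_x \otimes x^\vee \otimes x = 0$. To see this, I would tensor the triangle~\eqref{eq:triangle} with $x$ and use the rigidity relation~\eqref{eq:(co)unit-relation}: the composite $x \xrightarrow{x \otimes \eta_x} x \otimes x^\vee \otimes x \xrightarrow{\eps_x \otimes x} x$ is the identity, so $x \otimes \eta_x$ is a split monomorphism, hence $\xi_x \otimes x = 0$ (the first map in a triangle whose second map is split mono must vanish). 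A fortiori $\xi_x \otimes x^\vee \otimes x = 0$, i.e.\ $\xi_x \otimes \cone(\xi_x) = 0$, as needed. (Alternatively one argues directly that $\eta_x \otimes x$ is split mono via the triangle identity and reaches the same conclusion.)

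With the hypothesis of Proposition~\ref{prop:spectra^3} in hand, I get
\[
\ideal{\cone(\xi_x)} = \SET{z\in\cat{K}}{\xi_x\textrm{ is \tnil\ on }z} = \SET{z\in\cat{K}}{\xi_x\potimes{n}\otimes z=0\textrm{ for some }n\ge1}\,.
\]
So it remains to identify $\ideal{\cone(\xi_x)}$ with $\ideal{x}$. One inclusion is immediate: $\cone(\xi_x) \cong x^\vee \otimes x$ lies in the tt-ideal $\ideal{x}$ since tt-ideals are closed under tensoring. For the reverse, I use that $x$ is rigid: by the remark following~\eqref{eq:(co)unit-relation}, $x$ is a direct summand of $x \potimes{2} \otimes x^\vee \cong x \otimes (x^\vee \otimes x)$, which lies in $\ideal{x^\vee \otimes x} = \ideal{\cone(\xi_x)}$; since tt-ideals are closed under direct summands, $x \in \ideal{\cone(\xi_x)}$, hence $\ideal{x} \subseteq \ideal{\cone(\xi_x)}$. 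This establishes~\eqref{eq:key}.

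For the final sentence, I need that each $\xi_x\potimes{n}$ is \tnil\ on its own cone. The cleanest route is to invoke Proposition~\ref{prop:spectra^3} once more, now with the displayed equality $\SET{z}{\xi_x \textrm{ \tnil\ on } z} = \ideal{\cone(\xi_x\potimes{n})}$: this says precisely that $\cone(\xi_x\potimes{n})$ lies in $\SET{z}{\xi_x \textrm{ \tnil\ on } z}$, so $\xi_x\potimes{m} \otimes \cone(\xi_x\potimes{n}) = 0$ for some $m \ge 1$; then $\xi_x\potimes{n}$ is \tnil\ on its cone because $(\xi_x\potimes{n})\potimes{k} = \xi_x\potimes{nk}$, and taking $k$ with $nk \ge m$ gives $(\xi_x\potimes{n})\potimes{k}\otimes\cone(\xi_x\potimes{n}) = 0$. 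I expect the main (and only real) obstacle to be the verification that $\xi_x \otimes \cone(\xi_x) = 0$ via the triangle identity; everything afterward is a formal application of the two previous propositions together with the rigidity of $x$. A small point to get right is that $\cat{K}$ is not assumed rigid, so I must only ever use rigidity of $x$ itself and of objects built tensorially from $x$ and $x^\vee$, never of arbitrary objects of $\cat{K}$.
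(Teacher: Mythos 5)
Your proof follows essentially the same route as the paper's: tensor the triangle~\eqref{eq:triangle} with $x$, use the triangle identity~\eqref{eq:(co)unit-relation} to conclude that $x\otimes\eta_x$ is a split mono, deduce $\xi_x\otimes x=0$ and hence $\xi_x\otimes\cone(\xi_x)=0$, then apply Proposition~\ref{prop:spectra^3} and identify $\ideal{\cone(\xi_x)}=\ideal{x^\vee\otimes x}$ with $\ideal{x}$ via rigidity of~$x$. Your version is correct and in fact slightly more explicit than the paper's in two small places: you spell out why $\ideal{x^\vee\otimes x}=\ideal{x}$ (using that $x$ is a summand of $x^{\otimes 2}\otimes x^\vee$), and you make precise the passage from ``$\xi_x$ is $\otimes$-nilpotent on $\cone(\xi_x^{\otimes n})$'' (which Proposition~\ref{prop:spectra^3} gives directly) to the asserted ``$\xi_x^{\otimes n}$ is $\otimes$-nilpotent on its own cone,'' by rewriting $(\xi_x^{\otimes n})^{\otimes k}=\xi_x^{\otimes nk}$ and choosing $k$ large -- a step the paper leaves implicit. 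Your caveat about only using rigidity of $x$ and objects built from $x,x^\vee$ is exactly the right point to watch, and you do respect it throughout.
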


\begin{proof}
Consider the exact triangle obtained by tensoring~\eqref{eq:triangle} with~$x$:
\[
\xymatrix@C=4em{
x\otimes w_x \ar[r]^-{x\otimes \xi_x}
& x \ar[r]^-{x\otimes \eta_x}
& x\otimes x^\vee\otimes x \ar[r]^-{x\otimes \zeta_x}
& \Sigma x\otimes w_x
}
\]
By the unit-counit relation~\eqref{eq:(co)unit-relation}, the morphism $x\otimes \eta_x$ is a monomorphism. This forces $x\otimes \xi_x=0$. Hence $\xi_x\otimes \cone(\xi_x)\simeq \xi_x\otimes x^\vee\otimes x=0$ and we can apply Proposition~\ref{prop:spectra^3} to $\xi=\xi_x$. It gives us~\eqref{eq:key} since $\ideal{\cone(\xi_x)}=\ideal{x^\vee\otimes x}=\ideal{x}$ by rigidity of~$x$. The `moreover part' also follows from Proposition~\ref{prop:spectra^3} where we proved that $\xi$ is \tnil\ on~$\cone(\xi\potimes{n})$.
\end{proof}

The above result allows us to translate questions about tt-ideals into a \tnilce\ problem. We isolate a surjectivity argument that we shall use twice.
\begin{Lem}
\label{lem:key}%
Under Hypotheses~\ref{hyp}, choose for every $x\in\cat{K}$ an exact triangle as in~\eqref{eq:triangle}. Let $\cP\in\SpcK$ be a prime. Suppose that $\cP$ satisfies the following technical condition:
\begin{equation}
\label{eq:N}%
\textrm{For all }x\in \cP,\textrm{ all }s\in\cat{K}\oursetminus\cP\textrm{ and all }n\ge1,\textrm{ we have }F(\xi_x\potimes{n}\otimes s)\neq0.
\end{equation}
Then $\cP$ belongs to the image of~$\varphi\colon\SpcL\to \SpcK$.
\end{Lem}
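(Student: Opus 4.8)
The goal is to produce a prime $\cQ\in\SpcL$ with $\varphi(\cQ)=F\inv(\cQ)=\cP$, and the standard way to build such a prime is via the prime-ideal existence statement~\eqref{it:A}: I will exhibit in~$\cat{L}$ a $\otimes$-multiplicative class~$S'$ and a tt-ideal $\cat{J}'$ with $\cat{J}'\cap S'=\varnothing$, engineered so that the resulting prime~$\cQ$ has $F\inv(\cQ)$ wedged between $\cP$ and itself, forcing equality. The natural candidates are $S'=F(\cat{K}\oursetminus\cP)$ (the image of the complement of~$\cP$), which is $\otimes$-multiplicative in~$\cat{L}$ since $\cP$ is prime and $F$ is tensor, and $\cat{J}'=\ideal{F(\cP)}$, the tt-ideal of~$\cat{L}$ generated by the image of~$\cP$.

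\textbf{Key steps.} First I would check the two easy containments: if $\cQ\in\SpcL$ satisfies $\cat{J}'\subseteq\cQ$ then $F(\cP)\subseteq\cQ$, so $\cP\subseteq F\inv(\cQ)$; and if $\cQ\cap S'=\varnothing$ then $F(s)\notin\cQ$ for every $s\notin\cP$, i.e.\ $F\inv(\cQ)\cap(\cat{K}\oursetminus\cP)=\varnothing$, which says $F\inv(\cQ)\subseteq\cP$. Combining, $F\inv(\cQ)=\cP$ as desired. So everything reduces to verifying the hypothesis of~\eqref{it:A}, namely $\ideal{F(\cP)}\cap F(\cat{K}\oursetminus\cP)=\varnothing$. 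This is exactly where the technical condition~\eqref{eq:N} enters. Suppose for contradiction that some $F(s)$ with $s\notin\cP$ lies in $\ideal{F(\cP)}$. I want to leverage Corollary~\ref{cor:key}: for each $x\in\cP$ we have $\ideal{x}=\SET{z}{\xi_x\potimes{n}\otimes z=0\text{ for some }n}$ in~$\cat{K}$, and since $F$ is tensor-triangulated it sends the triangle~\eqref{eq:triangle} for $x$ to a triangle exhibiting $F(\xi_x)$ with the analogous property in~$\cat{L}$. The plan is to show that $F(s)\in\ideal{F(\cP)}=\ideal{F(\cP)}$ implies $F(\xi_x\potimes{n}\otimes s)=0$ for suitable $x\in\cP$ and $n$, contradicting~\eqref{eq:N}.

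\textbf{The main obstacle.} The delicate point is controlling the tt-ideal $\ideal{F(\cP)}$ generated by a whole collection of objects, whereas Corollary~\ref{cor:key} gives a clean description only for $\ideal{x}$ generated by a single~$x$. The resolution I anticipate: an element of $\ideal{F(\cP)}$ lies in $\ideal{F(x)}$ for finitely many $x_1,\dots,x_k\in\cP$, and since $\cP$ is a tt-ideal, $x_1\oplus\cdots\oplus x_k\in\cP$ (indeed, in a tt-ideal one can even combine $x_1,\dots,x_k$ into a single object, e.g.\ their direct sum, whose generated tt-ideal contains each $\ideal{x_i}$); so without loss of generality $F(s)\in\ideal{F(x)}$ for a single $x\in\cP$. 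Now apply Corollary~\ref{cor:key} in~$\cat{L}$ — but there is a subtlety, since $\cat{L}$ need not be rigid, so I cannot directly invoke~\eqref{eq:key} for $F(x)$ inside~$\cat{L}$. Instead, $x$ is rigid in~$\cat{K}$ (it comes from~$\cat{K}$, and $\cP\subseteq\cat{K}$), hence $F(x)$ is rigid in~$\cat{L}$ by Remark~\ref{rem:rigid}, and $F(\xi_x)$ is a homotopy fiber of $\eta_{F(x)}$; so Corollary~\ref{cor:key} applies with $\cat{K}$ replaced by~$\cat{L}$ and the rigid object~$F(x)$, yielding $\ideal{F(x)}=\SET{z\in\cat{L}}{F(\xi_x)\potimes{n}\otimes z=0\text{ for some }n}$. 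Therefore $F(s)\in\ideal{F(x)}$ gives $F(\xi_x)\potimes{n}\otimes F(s)=0$ for some $n\ge1$, i.e.\ $F(\xi_x\potimes{n}\otimes s)=0$, which directly contradicts~\eqref{eq:N}. Hence $\ideal{F(\cP)}\cap F(\cat{K}\oursetminus\cP)=\varnothing$, \eqref{it:A} applies, and we obtain the required~$\cQ$. I expect the only real care needed is the reduction from finitely many generators to a single one and the clean transport of Corollary~\ref{cor:key} across $F$ using rigidity of objects coming from~$\cat{K}$.
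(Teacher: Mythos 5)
Your proof is correct and follows essentially the same route as the paper: both use the existence trick~\eqref{it:A} with $S'=F(\cat{K}\oursetminus\cP)$ and $\cat{J}'=\ideal{F(\cP)}$, reduce membership in $\ideal{F(\cP)}$ to membership in $\ideal{F(x)}$ for a single $x\in\cP$ by combining finitely many generators into a direct sum inside~$\cP$, and then transport Corollary~\ref{cor:key} across $F$ to the rigid object $F(x)$ in~$\cat{L}$ to obtain the contradiction with~\eqref{eq:N}. The paper merely packages the single-generator reduction a bit differently (by showing directly that the class $\{y:\exists\,x\in\cP,\ y\in\ideal{F(x)}\}$ is a tt-ideal), but the underlying idea is identical.
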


\begin{proof}
Consider the complement $S=\cat{K}\oursetminus \cP$. Let $\cat{J}\subseteq\cat{L}$ be the tt-ideal generated by~$F(\cP)$, just viewed as a class of objects in~$\cat{L}$. We claim that $\cat{J}=\ideal{F(\cP)}$ equals
\[
\cat{J}':=\SET{y\in\cat{L}}{\textrm{there exists }x\in\cP\textrm{ such that }y\in\ideal{F(x)}}.
\]
Indeed, since we have $F(\cP)\subseteq\cat{J}'\subseteq\cat{J}$ directly from the definitions, it suffices to show that $\cat{J}'$ is a tt-ideal. It is clearly thick and a $\otimes$-ideal. For closure under cones, if $y_1\to y_2\to y_3\to \Sigma y_1$ is exact in~$\cat{L}$ and $y_i\in\ideal{F(x_i)}$ for $x_i\in\cP$ and $i=1,2$, then $y_3\in\ideal{y_1,y_2}\subseteq\ideal{F(x_1),F(x_2)}=\ideal{F(x_1\oplus x_2)}$ and $x_1\oplus x_2$ still belongs to~$\cP$.

Now, for every object~$x\in\cat{K}$, the tt-functor $F\colon\cat{K}\to \cat{L}$ sends an exact triangle over the unit~$\eta_x$ as in~\eqref{eq:triangle} to an exact triangle in~$\cat{L}$:
\[
\xymatrix@C=4em{
F(w_x) \ar[r]^-{F(\xi_x)}
& \unit \ar[r]^-{\eta_{F(x)}}
& F(x)^\vee\otimes F(x) \ar[r]^-{F(\zeta_x)}
& \Sigma F(w_x)\,.
}
\]
Here we use that $F(\eta_x)=\eta_{F(x)}$ which is another way of saying that $F$ preserves duals. See Remark~\ref{rem:rigid}. Using this last exact triangle in Corollary~\ref{cor:key} for the rigid object~$F(x)$ in the tt-category~$\cat{L}$, we see that
\[
\ideal{F(x)}=\SET{y\in\cat{L}}{F(\xi_x)\potimes{n}\otimes y=0\textrm{ for some }n\ge 1}\,.
\]
Combining this with the description of~$\cat{J}=\ideal{F(\cP)}$ as $\cat{J}'$ above, we obtain
\[
\ideal{F(\cP)}=\SET{y\in\cat{L}}{F(\xi_x)\potimes{n}\otimes y=0\textrm{ for some }n\ge 1\textrm{ and some }x\in\cP}\,.
\]
It follows that if $s\in S=\cat{K}\oursetminus\cP$ then $F(s)$ cannot belong to~$\cat{J}=\ideal{F(\cP)}$. Indeed, if $F(s)\in\ideal{F(\cP)}$ then by the above there exists $x\in\cP$ and $n\ge1$ such that $0=F(\xi_x)\potimes{n}\otimes F(s)\cong F(\xi_x\potimes{n}\otimes s)$ since $F$ is a $\otimes$-functor. This contradicts~\eqref{eq:N}.

In short, we have shown that the $\otimes$-multiplicative class $F(S)=F(\cat{K}\oursetminus \cP)$ does not meet the tt-ideal $\cat{J}=\ideal{F(\cP)}$, in the tt-category~$\cat{L}$. By the existence trick~\eqref{it:A} again, there exists a prime~$\cQ$ satisfying the following two relations: $\cat{J}\subseteq \cQ$ and $F(S)\cap \cQ=\varnothing$. Unpacking the definition of $S=\cat{K}\oursetminus\cP$ and $\cat{J}=\ideal{F(\cP)}$, these two relations mean respectively $\cP\subseteq F\inv(\cQ)$ and $F\inv(\cQ)\subseteq \cP$. Hence $\cP=F\inv(\cQ)=\varphi(\cQ)$ as wanted.
\end{proof}

We are now ready to prove our main result.

\begin{proof}[Proof of Theorem~\ref{thm:main}]
\

(a)$\then$(b): Suppose that $\varphi\colon\SpcL\to \SpcK$ is surjective and let $f\colon x\to y$ be a morphism such that $F(f)=0$ and which is \tnil\ on its cone, say $f\potimes{m}\otimes\cone(f)=0$. It follows from the exact triangle $x\oto{f} y\to \cone(f)\to \Sigma x$ in~$\cat{K}$ and from $F(f)=0$ that $F(\cone(f))\simeq F(y)\oplus \Sigma F(x)$ in~$\cat{L}$. Taking supports, we have $\supp(F(\cone(f)))=\supp(F(x))\cup \supp(F(y))$. By~\eqref{it:D}, this translates into
\[
\varphi\inv(\supp(\cone(f)))=\varphi\inv(\supp(x))\cup \varphi\inv(\supp(y))=\varphi\inv(\supp(x)\cup \supp(y))\,.
\]
Since $\varphi$ is surjective, this implies $\supp(\cone(f))=\supp(x)\cup \supp(y)$. Therefore $x,y\in\ideal{\cone(f)}$. But we assumed that $f$ is \tnil\ on~$\cone(f)$ and it follows from Proposition~\ref{prop:nil-tt-ideal} that $f$ is also \tnil\ on~$x$ and on~$y$. This means that there exists $n\ge 1$ such that $f\potimes{n}\otimes x=0\colon x\potimes{(n+1)}\to y\potimes{n}\otimes x$. But then $f\potimes{(n+1)}$ decomposes as
\[
\xymatrix@C=5em{
x\potimes{n+1} \ar[r]_-{f\potimes{n}\otimes x=0} \ar@/^1em/[rr]^-{f\potimes{(n+1)}}
& y\potimes{n}\otimes x \ar[r]_-{y\potimes{n}\otimes f}
& y\potimes{(n+1)}
}
\]
and is therefore also zero, that is, $f\potimes{(n+1)}=0$ as wanted.

(b)$\then$(a):
Suppose that $F\colon\cat{K}\to \cat{L}$ detects \tnilce\ of those morphisms which are already zero on their cone. Let $\cP\in\SpcK$ be a prime and let us show that property~\eqref{eq:N} in Lemma~\ref{lem:key} is satisfied. Let $g=\xi_x\potimes{n}\otimes s$ be the morphism in~\eqref{eq:N} for some objects $x\in\cP$ and $s\in\cat{K}\oursetminus\cP$ and for $n\ge1$. Suppose \ababs\ that $F(g)=0$. The cone of~$g=\xi_x\potimes{n}\otimes s$ is simply $\cone(\xi_x\potimes{n})\otimes s$. By Corollary~\ref{cor:key}, $\xi_x\potimes{n}$ is \tnil\ on its cone. Hence $g$ is \tnil\ on its cone as well. We can therefore apply our assumption~\eqref{it:nil-cone} to~$g$ and deduce from the (absurd) assumption $F(g)=0$ that $g=\xi_x\potimes{n}\otimes s$ is \tnil. In other words, $\xi_x$ is $\otimes$-nilpotent on~$s\potimes{m}$ for some $m\ge1$. By Corollary~\ref{cor:key} again, this implies that $s\potimes{m}$ belongs to~$\ideal{x} \subseteq\cP$, and therefore $s\in\cP$ since $\cP$ is prime, a contradiction with the choice of~$s$ in~$S=\cat{K}\oursetminus\cP$. In short, we have verified property~\eqref{eq:N} of Lemma~\ref{lem:key} for the prime~$\cP$, which tells us that $\cP$ belongs to the image of~$\varphi$ as claimed.
\end{proof}

\begin{center}*\ *\ *\end{center}

Let us now prove Theorems~\ref{thm:FU} and~\ref{thm:FU+}. We therefore assume the existence of an adjoint~$U\colon \cat{L}\to \cat{K}$ to our tensor-triangulated functor~$F$:
\begin{equation}
\label{eq:FU}%
\vcenter{\xymatrix@R=2em{
\cat{K} \ar@<-.5em>[d]_-{F} \ar@{}[d]|-{\adj}
\\
\cat{L} \ar@<-.5em>[u]_-{U}
}}
\end{equation}
By general theory, $U$ must satisfy a projection formula
\begin{equation}
\label{eq:proj-form}%
U(F(x)\otimes z)\cong x\otimes U(z)
\end{equation}
for all~$x\in\cat{K}$ and $z\in\cat{L}$. The latter is an easy consequence of rigidity of~$x$ and the adjunctions~\eqref{eq:adj-dual} and~\eqref{eq:FU}. See for instance~\cite[Prop.\,3.2]{FauskHuMay03}.

\begin{proof}[Proof of Theorem~\ref{thm:FU+}]
Let $\cP\in\SpcK$. We need to show that $\cP\in\im(\varphi)$ if and only if~$\cP\in\supp(U(\unit))$. The latter means $U(\unit)\notin\cP$.

Suppose first that $\cP=\varphi(\cQ)$ for some $\cQ\in\SpcL$. Then $\cP=F\inv(\cQ)$. To show $U(\unit)\notin\cP$ it therefore suffices to show that $FU(\unit)\notin\cQ$. This is easy since, by the unit-counit relation for~\eqref{eq:FU}, the object $FU(\unit_{\cat{L}})\cong FUF(\unit_{\cat{K}})$ admits $F(\unit_{\cat{K}})\cong\unit_{\cat{L}}$ as a direct summand and $\unit$ cannot belong to any prime.

The reverse inclusion is the interesting one. So, let $\cP\in\supp(U(\unit))$, meaning $U(\unit)\notin\cP$. Let us show that $\cP$ satisfies condition~\eqref{eq:N} of Lemma~\ref{lem:key}. Take objects $x\in\cP$ and $s\in\cat{K}\oursetminus\cP$, and suppose \ababs\ that $F(g)=0$ where $g=\xi_x\potimes{n}\otimes s$ for some $n\ge 1$ as before. By the projection formula~\eqref{eq:proj-form} for~$z=\unit$, the property $UF(g)=U(0)=0$ implies $g\otimes U(\unit)=0$. Consequently we have an exact triangle
\[
\xymatrix@C=2em{
w_x\potimes{n}\otimes s\otimes U(\unit) \ar[rr]^-{g\otimes U(\unit)=0}
&& s\otimes U(\unit) \ar[r]
& \cone(g)\otimes U(\unit) \ar[r]
& \Sigma w_x\potimes{n}\otimes s\otimes U(\unit)
}
\]
in~$\cat{K}$. This proves that $s\otimes U(\unit)$ is a direct summand of~$\cone(g)\otimes U(\unit)\in \ideal{\cone(g)} \subseteq\ideal{\cone(\xi_x\potimes{n})}$. By Proposition~\ref{prop:spectra^3}, the latter is contained in~$\ideal{x} \subseteq\cP$. In short, we have $s\otimes U(\unit)\in\cP$. Since $\cP$ is prime this forces $s\in\cP$ or $U(\unit)\in\cP$, which are both absurd. So we have proven~\eqref{eq:N} for~$\cP$ and we conclude by Lemma~\ref{lem:key} again.
\end{proof}

\begin{proof}[Proof of Theorem~\ref{thm:FU}]
In view of Theorem~\ref{thm:FU+} it suffices to prove that $F\colon\cat{K}\to \cat{L}$ detects \tnilce\ if and only if $\supp(U(\unit))=\SpcK$, which means $\ideal{U(\unit)}=\cat{K}$. This is a standard argument, as in~\cite[Prop.\,3.15]{Balmer16a} for instance. Let us outline it for completeness. The point is that $A:=U(\unit)$ is a ring-object (for $U$ is lax-monoidal). Let $J \oto{\xi} \unit \oto{u} A \to \Sigma J$ be an exact triangle over the unit $u\colon\unit \to A$ (the unit of the $F\adj U$ adjunction at~$\unit$). We have $A\otimes \xi=0$ (since $A\otimes u$ is a split monomorphism, retracted by multiplication~$A\otimes A\to A$). A morphism $f\colon x\to y$ satisfies $F(f)=0$ if and only if the composite $x\oto{f}y\oto{u\otimes y}A\otimes y$ is zero (by adjunction and the projection formula: $A\otimes-\simeq UF(-)$); this is in turn equivalent to the morphism $f\colon x\to y$ factoring via~$\xi\otimes y\colon J\otimes y\to y$ (by the exact triangle $J\otimes y\otoo{\xi\otimes y}y\otoo{u\otimes y}A\otimes y\otoo{}\Sigma J\otimes y$). So we are down to proving that $\xi\colon J\to \unit$ is \tnil\ if and only if~$\ideal{A}=\cat{K}$. This is now immediate from Proposition~\ref{prop:spectra^3}, which says that $\ideal{A}=\SET{z\in\cat{K}}{\xi\textrm{ is \tnil\ on }z}$. Indeed, $\unit\in\ideal{A}$ if and only if~$\xi$ is \tnil\ on~$\unit$.
\end{proof}


\end{document}